\numberwithin{equation}{section}
\declaretheorem[Refname={Theorem,Theorems}]{theorem}
\numberwithin{theorem}{section} 
\declaretheorem[style=definition,numberlike=theorem,Refname={Definition,Definitions}]{definition}
\declaretheorem[style=definition,numberlike=theorem,Refname={Assumption,Assumptions}]{assumption}
\declaretheorem[style=definition,numberlike=theorem,Refname={Remark,Remarks}]{remark}
\declaretheorem[numberlike=theorem,Refname={Lemma,Lemmas}]{lemma}
\declaretheorem[name=Corollary,numberlike=theorem,Refname={Corollary,Corollaries}]{corollary}
\declaretheorem[name=Proposition,numberlike=theorem,Refname={Proposition,Propositions}]{proposition}
\newtheoremstyle{named}{}{}{\itshape}{}{\bfseries}{.}{.5em}{\thmnote{#3}}
\theoremstyle{named}
\newtheorem*{namedtheorem}{Theorem}
\DeclareMathOperator*{\argmin}{arg\,min}
\DeclarePairedDelimiterX\Set[2]{\lbrace}{\rbrace}%
{ #1 \,:\, #2 }                                         
\DeclarePairedDelimiterX\inprod[2]{\langle}{\rangle}%
{ #1 , #2 }                                             
\DeclarePairedDelimiter\floor{\lfloor}{\rfloor}         
\newcommand{\R}{\mathbb{R}} 
\newcommand{\N}{\mathbb{N}} 
\newcommand{\T}{\mathsf{T}} 
\newcommand{\ML}{\textup{ML}}
\newcommand{\CV}{\textup{CV}}
\DeclarePairedDelimiterX{\infdivx}[2]{(}{)}{%
  #1\;\delimsize\|\;#2%
}
\newcommand{\GP}{\textup{GP}}
\newcommand{\rev}[1]{#1}
\title{\textbf{Asymptotic Bounds for Smoothness Parameter Estimates in Gaussian Process Interpolation}\footnote{Journal reference: T.\ Karvonen (2023). Asymptotic bounds for smoothness parameter estimates in Gaussian process interpolation. \emph{SIAM/ASA Journal on Uncertainty Quantification}, 11(4):1225--1257.}}
\author{
  Toni Karvonen
  \vspace{0.2cm}
  \\
  \emph{Department of Mathematics and Statistics} \\
  \emph{University of Helsinki, Finland}
}
\date{}
\begin{document}

\maketitle

\renewcommand{\abstractname}{\vspace{-\baselineskip}}
\vspace{-0.5cm}
\begin{abstract}
  \noindent
  \textbf{Abstract.}
  It is common to model a deterministic response function, such as the output of a computer experiment, as a Gaussian process with a Matérn covariance kernel.
  The smoothness parameter of a Matérn kernel determines many important properties of the model in the large data limit, including the rate of convergence of the conditional mean to the response function.
  We prove that the maximum likelihood estimate of the smoothness parameter cannot asymptotically undersmooth the truth when the data are obtained on a fixed bounded subset of $\R^d$.
  That is, if the data-generating response function has Sobolev smoothness $\nu_0 > d/2$, then the smoothness parameter estimate cannot be asymptotically less than $\nu_0$.
  The lower bound is sharp.
  Additionally, we show that maximum likelihood estimation \rev{recovers the true} smoothness for a class of compactly supported self-similar functions.
  For cross-validation we prove an asymptotic lower bound $\nu_0 - d/2$, which however is unlikely to be sharp.
  The results are based on approximation theory in Sobolev spaces and some general theorems that restrict the set of values that the parameter estimators can take.
  \\
  \\
  \textbf{Keywords:} Gaussian processes, parameter estimation, Matérn kernels, self-similarity
  \\
  \\
  \textbf{MSC2020:} 60G15, 62G20, 46E22, 65D12
\end{abstract}

\section{Introduction}

Gaussian process interpolation is commonly used to approximate a deterministic response or data-generating function which may, for example, represent the output of a computer experiment~\citep{Sacks1989}.
A zero-mean Gaussian process is defined by a positive-definite covariance kernel $K_\theta$ with parameters $\theta \in \Theta$.
To ensure that Gaussian process interpolation yields a good approximation and reasonable quantification of uncertainty for the response function at unseen data locations, it is necessary to estimate the kernel parameters from the data.
Due to its flexibility and interpretability, the Matérn class of stationary covariance kernels is often preferred in applications~\citep{Stein1999}.
Let $\nu$, $\sigma$, and $\lambda$ be positive smoothness, magnitude, and scale parameters, respectively.
A Matérn kernel on $\R^d$ is defined as
\begin{equation} \label{eq:matern}
  K_\nu(x, y) = \sigma^2 c(\nu) \bigg( \frac{\sqrt{2\nu} \norm[0]{x - y}}{\lambda} \bigg)^{\!\nu} \mathcal{K}_{\nu} \bigg( \frac{\sqrt{2\nu} \norm[0]{x - y}}{\lambda} \bigg) \quad \text{ for } \quad x, y \in \R^d,
\end{equation}
where $\mathcal{K}_\nu$ is the modified Bessel function of the second kind of order~$\nu$ and $c(\nu)$ a positive $\nu$-dependent scaling factor.
Much is known about fixed-domain asymptotics of various estimators for the parameters $\sigma$ and $\lambda$ (as well as $\sigma^2 \lambda^{2\nu}$ for a fixed $\nu$) for Matérns and related kernels in both the \emph{Bayesian} setting where the response function is assumed to be a Gaussian process~\citep[e.g.,][]{Ying1991, Loh2005, Anderes2010, Bachoc2017} and the \emph{frequentist} setting where the response function is a fixed deterministic function~\citep{XuStein2017, Karvonen2020}.
As it defines assumed degree of differentiability of the response function and is microergdic~\citep[Section~6.2]{Stein1999}, the smoothness parameter $\nu$ is arguably the most important parameter of a Matérn kernel.
\rev{While it is common to fix $\nu$ beforehand, doing so is problematic when the smoothness of the response function is unknown (though data-driven estimation of $\sigma$ and $\lambda$ may overcome these issues):}
\begin{itemize}
\item If the model \emph{undersmooths} the truth (i.e., the response function is \rev{smoother} than assumed), uncertainty quantification is reliable, in the sense that the response function is contained in a credible set centered at the conditional mean for some fixed credible level \rev{[see~\eqref{eq:discussion-undersmoothing} and~\eqref{eq:discussion-credible-set}]}. 
  However, this comes at the cost of (likely) sub-optimal approximation accuracy.
\item If the model \emph{oversmooths} the truth, the approximation quality is best possible, in that \rev{the Narcowich--Ward--Wendland escape theorem [see~\eqref{eq:misspecified-rate}] guarantees} convergence of the conditional mean to the response function with a rate that is worst-case optimal in \rev{any Sobolev space which contains the response function}.
  However, uncertainty quantification may be unreliable.
\end{itemize}
The effects of under- and oversmoothing in the frequentist setting, as well as connections to the literature on construction of adaptive confidence and credible sets, are discussed in more detail in \Cref{sec:uq}.

\rev{Maximum likelihood estimation is perhaps the most popular data-driven approach to select the parameters of a Matérn kernel.}
It seems that the only theoretical results concerning maximum likelihood estimation of the smoothness parameter of a Matérn-type kernel have been obtained by \citet{Chen2021} and \citet{Petit2022}, \rev{who consider the periodic version of the Matérn kernel on $[0, 1]^d$~\citep[see][Section~6.7]{Stein1999} and show that maximum likelihood estimators are consistent}.
\citet{Szabo2015} and \citet{Knapik2016} have derived results for maximum likelihood smoothness estimation in a related white noise model. 
\rev{\citet{Loh2015, LohSunWen2021}; and \citet{LohSun2023} construct other smoothness estimators for the Matérn model whose consistency they prove under certain sampling schemes on $[0, 1]^d$.}
Other work on maximum likelihood estimation, as well as cross-validation, of parameters in Gaussian process and related models can be found in \citet{Bachoc2013, Szabo2013, RousseauSzabo2017, Bachoc2017, XuStein2017}; and \citet{HadjiSzabo2021}.
This article contains what appear to be the first theoretical results on maximum likelihood estimation (as well as cross-validation) of the smoothness parameter of the Matérn class on subsets of $\R^d$.
These results are described next for maximum likelihood estimation.
In short, we prove that (a) asymptotic undersmoothing is not possible and (b) smoothness is estimated \rev{consistently} for a class of compactly supported self-similar functions.

Let $f_0$ be a real-valued response function that is defined on a sufficiently regular bounded connected open subset $\Omega$ of $\R^d$, such as $\Omega = (0, 1)^d$, and suppose that $\{x_i\}_{i=1}^\infty$ is any quasi-uniform sequence (see \Cref{def:quasi-uniform}) of pairwise distinct points in $\Omega$.
Let
\begin{equation*}
  \hat{\nu}_\ML^{f_0}(X_n) = \argmin_{\nu \in \Theta} \big\{ f_0(X_n)^\T K_\nu(X_n)^{-1} f_0(X_n) + \log \det K_\nu(X_n) \big\},
\end{equation*}
where $\Theta \subset (0, \infty)$ is some interval, denote any maximum likelihood estimate of the smoothness parameter $\nu$ given the data vector $f_0(X_n) = (f_0(x_1), \ldots, f_0(x_n)) \in \R^n$ consisting of noiseless evaluations of $f_0$ at the points $X_n = \{x_i\}_{i=1}^n$.
Here $K_\nu(X_n) \in \R^{n \times n}$ is the kernel matrix for the Matérn kernel~\eqref{eq:matern} with elements $(K_\nu(X_n))_{ij} = K_{\nu}(x_i, x_j)$.
Let $\Theta = [\nu_\textup{min}, \nu_\textup{max}]$ for $0 < \nu_\textup{min} < \nu_\textup{max} < \infty$ be an interval large enough that the bounds and limits below can hold and suppose that the scaling factor $c(\nu)$ is bounded away from zero and infinity on $\Theta$ \rev{(e.g., being a continuous function of $\nu$)}.
\rev{The function $f_0$ is an element of $H^\alpha(\Omega)$, the Sobolev space of order $\alpha > d/2$, if it admits an extension $f_e \colon \R^d \to \R$ (i.e., $f_e|_\Omega = f_0$) whose Fourier transform $\smash{\widehat{f}_e}$ satisfies
\begin{equation*}
  \int_{\R^d} ( 1 + \norm[0]{\xi}^2)^\alpha \lvert \widehat{f}_e(\xi) \rvert^2 \dif \xi < \infty.
\end{equation*}
See \Cref{sec:sobolev} for more details on Sobolev spaces.
We prove that inclusion in a Sobolev space implies an asymptotic lower bound on the maximum likelihood estimate.}

\begin{namedtheorem}[No undersmoothing --- \Cref{thm:matern-main}]
Let $\nu_0 > d/2$.
If $f_0$ is an element of $H^{\nu_0}(\Omega)$, then 
\begin{equation} \label{eq:no-undersmoothing-intro}
  \liminf_{ n \to \infty } \hat{\nu}_\ML^{f_0}(X_n) \geq \nu_0.
\end{equation}
This bound is sharp in the sense that for every $\varepsilon > 0$ there is $f_0 \in H^{\nu_0}(\Omega)$ such that
\begin{equation*}
  \limsup_{ n \to \infty } \hat{\nu}_\ML^{f_0}(X_n) \leq \nu_0 + \varepsilon.
\end{equation*}
\end{namedtheorem}

\rev{Let $\nu(f_0) = \sup \Set{\nu > 0}{f_0 \in H^{\nu}(\Omega)}$ be the smoothness of $f_0$.
From~\eqref{eq:no-undersmoothing-intro} we get
\begin{equation*}
  \liminf_{ n \to \infty } \hat{\nu}_\ML^{f_0}(X_n) \geq \nu(f_0).
\end{equation*}
As satisfying as it would be, it does \emph{not} follow that $\smash{\hat{\nu}_\ML^{f_0}(X_n)} \to \nu(f_0)$.
In the context of density estimation and the Gaussian white noise model, it is well known that consistent estimation of smoothness and construction of adaptive confidence sets over Sobolev classes is impossible~\citep[Chapter~8]{PicardTribouley2000, GineNickl2010, Bull2012, Szabo2015, NicklSzabo2016, GineNickl2016}.
Additional self-similarity assumptions are needed to exclude ``inconvenient'' or ``deceptive'' functions whose smoothness cannot be estimated~\cite[see in particular][Section~3]{Szabo2015}.
In this vein, we say that $f_0$ is $\beta$-self-similar if it admits an extension $f_e$ such that
\begin{equation*}
  \sup_{\xi \in \R^d} \norm[0]{\xi}^{2\beta + d} \lvert \widehat{f}_e(\xi) \rvert^2 < \infty \quad \text{ and } \quad \int_{\norm[0]{\xi} \geq R} \lvert \widehat{f}_e(\xi) \rvert^2 \dif \xi \geq C R^{-2\beta}
\end{equation*}
for some positive $C$ and $R_0$ and all $R \geq R_0$.
The Fourier transform of a prototypical $f_e$ that satisfies these conditions is of order \smash{$\norm[0]{\xi}^{-(\beta+d/2)}$} as $\norm[0]{\xi} \to \infty$.
See \Cref{sec:self-similar-functions} for more details on self-similar functions.
We prove that maximum likelihood estimation of smoothness is consistent if $f_0$ is self-similar and supported on $\Omega$.
Because $\nu(f_0) = \beta$ if $f_0$ is $\beta$-self-similar (see \Cref{lemma:self-similar-inclusion}), in our context ``consistency'' simply means that the true smoothness of $f_0$ is recovered.}

\begin{namedtheorem}[Consistent estimation for self-similar functions --- \Cref{thm:matern-main-2}]
If $f_0$ is $\nu_0$-self-similar and has its support contained in $\Omega$, then
\begin{equation*}
  \lim_{n \to \infty} \hat{\nu}_\ML^{f_0}(X_n) = \nu_0.
\end{equation*}
\end{namedtheorem}

\rev{Except for the requirement that $f_0$ be supported in $\Omega$ in the latter theorem, the assumptions of our results are not particularly restrictive. More detailed discussion on the assumptions is deferred to \Cref{sec:main-matern}.}

Because the samples of a Gaussian process with a Matérn covariance kernel of smoothness $\nu_0$ have Sobolev smoothness $\nu_0$ but the reproducing kernel Hilbert space (RKHS) of the kernel is norm-equivalent to the Sobolev space of smoothness $\nu_0 + d/2$~\rev{(\citealp[e.g.,][]{Steinwart2019}; see \Cref{sec:sobolev,sec:sample-paths} for more details)}, the results indicate that maximum likelihood estimation recovers the smoothness for which $f_0$ ``resembles'' a sample from the corresponding Gaussian process rather than the smoothness for which $f_0$ is an element of the RKHS of the kernel.
When $f_0$ \emph{is} a zero-mean Gaussian process whose covariance kernel is a Matérn of smoothness $\nu_0$, then \Cref{cor:matern-samples}, a straightforward consequence of \Cref{thm:matern-main}, states that
\begin{equation*}
  \liminf_{ n \to \infty } \hat{\nu}_\ML^{f_0}(X_n) \geq \nu_0 \quad \text{almost surely}.
\end{equation*}
However, we emphasise that when $f_0$ is assumed a deterministic function, it does not have to be (nor should it be thought of as) a fixed sample path from a Gaussian process with some Matérn kernel---or in any other way related to some other stochastic process.
We also consider leave-one-out cross-validation estimation, for which we however can supply no upper bounds or results pertaining to self-similar functions. Even our lower bounds for cross-validation are likely to be off by $d/2$.

Our proofs make use of RKHSs and techniques from approximation theory in Sobolev spaces.
This particular approach has begun to gain popularity in various corners of the Gaussian process literature roughly within the past decade~\citep[e.g.,][]{Bull2011, StuartTeckentrup2018, Briol2019, WangTuoWu2020, Wynne2021}.
In \Cref{sec:general}, we begin by proving a number of general results (\Cref{thm:general-new,thm:upper-bounds-general,thm:general}) on parameter sets which cannot contain the parameter estimates.
The essence of these results is that maximum likelihood estimation and cross-validation attempt to find the simplest possible model, as quantified by the rate of decay of the conditional variance, that adequately explains the data.
\Cref{sec:matern} is then devoted to applying the general results to estimation of the Matérn smoothness parameter.
In \Cref{sec:smooth}, we discuss the application of \Cref{thm:general} to estimation of the scale parameter of infinitely smooth stationary kernels, such as the Gaussian kernel, though are unable to furnish any rigorous proofs.

\section{General Results} \label{sec:general}

This section reviews basic facts about Gaussian process interpolation and RKHSs and proves some general results on maximum likelihood estimation and cross-validation of covariance kernel parameters.

\subsection{Gaussian Process Interpolation}

Let $\Omega$ be an arbitrary infinite set which we call a \emph{domain} throughout this article.
By \emph{kernel} we mean a function $K_\theta \colon \Omega \times \Omega \to \R$ which is \emph{symmetric} and \emph{positive-definite}, in that
\begin{equation} \label{eq:pd-def}
  \sum_{i=1}^n \sum_{j=1}^n a_i a_j K_\theta(x_i, x_j) > 0
\end{equation}
for any $n \in \N$, any pairwise distinct $x_1, \ldots, x_n \in \Omega$, and any non-zero vector $(a_1, \ldots, a_n) \in \R^n$.
All kernels in this article are parametrised by some collection of parameters $\theta$ in a feasible parameter set~$\Theta$.
Equation~\eqref{eq:pd-def} implies that for any set $X = \{x_{i}\}_{i=1}^n \subset \Omega$ of $n$ pairwise distinct points the \emph{kernel matrix} $K_\theta(X) \in \R^{n \times n}$ with elements $(K_\theta(X))_{ij} = K_\theta(x_i, x_j)$ is positive-definite and thus invertible.
A Matérn kernel~\eqref{eq:matern} with any positive parameters is an example of a kernel on $\Omega = \R^d$.

That a stochastic process $f_\GP$ is a zero-mean Gaussian process with covariance $K_\theta$ implies that for any points $X$ the vector $(f_\GP(x_{1}), \ldots, f_\GP(x_{n}))$ is an $n$-dimensional normal random vector with mean zero and covariance $K_\theta(X)$.
Suppose that a \emph{deterministic} response function $f_0 \colon \Omega \to \R$ is modelled as a Gaussian process $f_\GP$.
Conditioning this process on the exact evaluations (i.e., data) $f_0(X) = (f_0(x_{1}), \ldots, f_0(x_{n})) \in \R^n$ of $f_0$ at some distinct points $X$ yields a conditional Gaussian process with the mean
\begin{equation} \label{eq:gp-mean}
  \mu_{\theta, f_0}(x \mid X) = \mathbb{E}[ f_\GP(x) \mid X, f_0(X) ] = K_\theta(x, X)^\T K_\theta(X)^{-1} f_0(X)
\end{equation}
and variance
\begin{equation} \label{eq:gp-variance}
  \mathbb{V}_\theta(x \mid X) = \mathrm{Var}[ f_\GP(x) \mid X, f_0(X) ] = K_\theta(x, x) - K_\theta(x, X)^\T K_\theta(X)^{-1} K_\theta(x, X),
\end{equation}
where $K_\theta(x, X)$ is an $n$-vector with elements $(K_\theta(x, X))_i = K_\theta(x, x_i)$.
Note that the variance can depend on $f_0$ only if $\theta$ is estimated from the data $f_0(X)$.

\subsection{Reproducing Kernel Hilbert Spaces}

Every symmetric positive-definite kernel $K_\theta \colon \Omega \times \Omega \to \R$ induces a unique \emph{reproducing kernel Hilbert space} (RKHS), $H(K_\theta)$.
This space consists of functions $f \colon \Omega \to \R$ and is equipped with an inner product $\inprod{\cdot}{\cdot}_\theta$ and the associated norm $\norm[0]{\cdot}_\theta$.
The terminology comes from the kernel~$K_\theta$ having the \emph{reproducing property}
\begin{equation} \label{eq:reproducing-property}
  \inprod{f}{K_\theta(\cdot, x)}_\theta = f(x) \quad \text{ for all } \quad f \in H(K_\theta) \: \text{ and } \: x \in \Omega.
\end{equation}
It may be difficult to determine if a given function is contained in the RKHS based merely on the algebraic form of the kernel and the function.
However, many general properties of the kernel, such as its continuity or degree of differentiability, are inherited by the functions in $H(K_\theta)$~\citep[e.g.,][Section~4.3]{Steinwart2008}.
Results on the relationship between RKHSs of stationary kernels whose Fourier transforms decay polynomially on $\R^d$ and Sobolev spaces are reviewed in \Cref{sec:matern}.
See the textbooks \citet{Berlinet2004} and \citet{Paulsen2016} for a wealth of additional information on RKHSs.

Most of our proofs rely on the connection between Gaussian process interpolation and optimal interpolation in an RKHS.
The history of this rather well known connection goes back at least to the work of \citet{KimeldorfWahba1970}.
We refer to \citet[Section~2.4]{Berlinet2004}; \citet{Scheuerer2013}; and \citet[Section~3]{Kanagawa2018} for recent reviews on the topic.
In short, the Gaussian process conditional mean equals the unique \emph{minimum-norm interpolant} in the RKHS and the conditional variance is the squared \emph{worst-case approximation error}.
That is,
\begin{equation} \label{eq:mean-is-min-norm}
  \mu_{\theta, f_0}(\cdot \mid X) = \argmin_{s \in H(K_\theta)} \Set[\big]{ \norm[0]{s}_\theta }{ s(x_i) = f_0(x_i) \text{ for every } i = 1, \ldots, n}
\end{equation}
and
\begin{equation} \label{eq:var-is-wce}
  \mathbb{V}_\theta(x \mid X) = \sup_{ \norm[0]{f}_\theta \leq 1} \abs[0]{ f(x) - \mu_{\theta, f}(x \mid X)}^2
\end{equation}
for every $x \in \Omega$.
Note that the correspondence~\eqref{eq:mean-is-min-norm} does not require that $f_0$ be an element of $H(K_\theta)$.
From~\eqref{eq:var-is-wce} it is straightforward to derive the fundamental error estimate
\begin{equation} \label{eq:rkhs-error}
  \abs[0]{ f(x) - \mu_{\theta, f}(x \mid X)} \leq \norm[0]{f}_\theta \mathbb{V}_\theta(x \mid X)^{1/2},
\end{equation}
which holds for every $f \in H(K_\theta)$ and $x \in \Omega$.

\subsection{Parameter Estimation in a General Setting} \label{sec:parameter-estimation}
 
Let $\smash{\{x_i\}_{i=1}^\infty}$ be a set of pairwise distinct points in $\Omega$ and denote $X_n = \smash{\{x_i\}_{i=1}^n}$.
Given evaluations of $f_0$ at points $X_n$, a \emph{maximum likelihood estimate}, $\smash{\hat{\theta}_{\ML}^{f_0}(X_n)}$, of $\theta$ is any minimiser of the function 
\begin{equation} \label{eq:ell-ml}
  \ell_{\ML}^{f_0}( \theta \mid X_n ) = f_0(X_n)^\T K_\theta(X_n)^{-1} f_0(X_n) + \log \det K_\theta(X_n),
\end{equation}
while a \emph{leave-one-out cross-validated estimate}, $\hat{\theta}_\CV^{f_0}(X_n)$, is any minimiser of the function
\begin{equation*}
  \ell_{\CV}^{f_0}( \theta \mid X_n ) = \sum_{i=1}^n \bigg[ \frac{(f_0(x_{i}) - \mu_{\theta, f_0}(x_i \mid X_n^i))^2}{\mathbb{V}_\theta( x_i \mid X_n^i )} + \log \mathbb{V}_\theta( x_{i} \mid X_n^i ) \bigg] ,
\end{equation*}
where we use superscripted $i$ to denote that the $i$th point has been removed (i.e., $X_n^i = X_n \setminus \{x_{i}\}$); see, for example, Section~5.4 in \citet{RasmussenWilliams2006}.
The following lemmas are useful.
Here we use the convention $\mathbb{V}_\theta( x \mid X_0 ) = \mathbb{V}_\theta( x \mid \emptyset ) = K_\theta(x, x)$.

\begin{lemma} \label{lemma:logdet}
  It holds that $\log \det K_\theta(X_n) = \sum_{i=1}^n \log \mathbb{V}_\theta(x_{i} \mid X_{i-1})$.
\end{lemma}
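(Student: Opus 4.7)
The plan is to prove the identity by induction on $n$ via the block-matrix (Schur complement) determinant formula, which matches the structure of the conditional variance in~\eqref{eq:gp-variance} exactly.

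First, for the base case $n = 1$, note that $\det K_\theta(X_1) = K_\theta(x_1, x_1) = \mathbb{V}_\theta(x_1 \mid \emptyset)$ by the stated convention, so both sides agree. For the inductive step, I would partition $K_\theta(X_n)$ into a block form by singling out the last point $x_n$:
\begin{equation*}
  K_\theta(X_n) = \begin{pmatrix} K_\theta(X_{n-1}) & K_\theta(X_{n-1}, x_n) \\ K_\theta(x_n, X_{n-1}) & K_\theta(x_n, x_n) \end{pmatrix}.
\end{equation*}
The standard block determinant identity then gives
\begin{equation*}
  \det K_\theta(X_n) = \det K_\theta(X_{n-1}) \cdot \bigl( K_\theta(x_n, x_n) - K_\theta(x_n, X_{n-1}) K_\theta(X_{n-1})^{-1} K_\theta(X_{n-1}, x_n) \bigr),
\end{equation*}
and the Schur complement on the right is, by the defining formula~\eqref{eq:gp-variance}, precisely $\mathbb{V}_\theta(x_n \mid X_{n-1})$. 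Taking logarithms, one obtains $\log \det K_\theta(X_n) = \log \det K_\theta(X_{n-1}) + \log \mathbb{V}_\theta(x_n \mid X_{n-1})$, and the induction hypothesis closes the argument.

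There is essentially no obstacle here: positive-definiteness of all principal submatrices $K_\theta(X_k)$, guaranteed by~\eqref{eq:pd-def}, ensures every determinant and every conditional variance appearing above is strictly positive, so all logarithms are well-defined. The only detail worth a sentence is noting that the block-matrix formula applies because $K_\theta(X_{n-1})$ is invertible, which again follows from~\eqref{eq:pd-def}.
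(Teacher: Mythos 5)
Your proof is correct and follows essentially the same route as the paper, which also invokes the block determinant (Schur complement) identity and iterates it over the points; you have merely made the induction and the positivity/invertibility justifications explicit. Nothing to add.
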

\begin{proof}
  The claim follows from straightforward iteration of the variance formula~\eqref{eq:gp-variance} and the block determinant identity
  \begin{equation*}
    \det \begin{pmatrix} a & b \\ b^\T & C \end{pmatrix} = \det(C) (a - b^\T C^{-1} b)
  \end{equation*}
  for any $a \in \R$, $b \in \R^{n-1}$ and any invertible $C \in \R^{(n-1) \times (n-1)}$.
\end{proof}

\begin{lemma} \label{lemma:minimum-norm-interpolation}
  For any $f_0 \colon \Omega \to \R$ we have 
  \begin{equation} \label{eq:mu-norm-explicit}
    f_0(X_n)^\T K_\theta(X_n)^{-1} f_0(X_n) = \norm[0]{\mu_{\theta, f_0}( \cdot \mid X_n)}_\theta^2.
  \end{equation}
  Moreover, if $f_0 \in H(K_\theta)$, then 
  \begin{equation*}
    f_0(X_n)^\T K_\theta(X_n)^{-1} f_0(X_n) = \norm[0]{\mu_{\theta, f_0}( \cdot \mid X_n)}_\theta^2 \leq \norm[0]{f_0}_\theta^2.
  \end{equation*}
\end{lemma}
\begin{proof}
  Equation~\eqref{eq:mu-norm-explicit} follows from the expression for the conditional mean in~\eqref{eq:gp-mean} and the fact, which is a consequence of the reproducing property in~\eqref{eq:reproducing-property}, that
  \begin{equation*}
    \norm[4]{\sum_{i=1}^n a_i K_\theta(\cdot, x_i) }_\theta^2 = \sum_{i=1}^n \sum_{j=1}^n a_i a_j \inprod{K_\theta(\cdot, x_i)}{K_\theta(\cdot, x_j)}_\theta = a^\T K_\theta(X_n) a
  \end{equation*}
  for any $a = (a_1, \ldots, a_n) \in \R^n$.
  The inequality is a consequence of the minimum-norm interpolation property in~\eqref{eq:mean-is-min-norm} and the fact that $f_0$ trivially interpolates itself.
\end{proof}

From the block matrix inversion formula one easily obtains the relatively well known~\citep[e.g.,][Section~4.2.2]{XuStein2017} expansion
\begin{equation} \label{eq:interpolant-norm-expansion}
  f_0(X_n)^\T K_\theta(X_n)^{-1} f_0(X_n) = \sum_{i=1}^n \frac{(f_0(x_i) - \mu_{\theta,f_0}(x_i \mid X_{i-1}))^2}{\mathbb{V}_\theta( x_i \mid X_{i-1})}.
\end{equation}
Applying~\eqref{eq:interpolant-norm-expansion} and \Cref{lemma:logdet} to~\eqref{eq:ell-ml} shows that the objective functions for maximum likelihood estimation and cross-validation are of similar form.
It should therefore be no surprise that the two parameter estimation methods share many properties~\citep[an additional interesting connection can be found in][]{FongHolmes2020}.

\subsubsection{Lower Bounds}

The following theorem yields lower bounds on smoothness parameter estimates.

\begin{theorem} \label{thm:general-new}
  Let $\Delta \subset \Theta$ and $\theta_0 \in \Theta$.
  \begin{enumerate}
  \item If $B$ is a set of real-valued functions on $\Omega$ such that
    \begin{equation} \label{eq:variance-decay-general-ml-new}
    \limsup_{n \to \infty} \sup_{ \theta \in \Delta } \sup_{f_0 \in B} \Bigg[ \norm[0]{\mu_{\theta_0, f_0}(\cdot \mid X_n)}_{\theta_0}^2 + \sum_{i=1}^n \log \frac{\mathbb{V}_{\theta_0}(x_{i} \mid X_{i-1})}{\mathbb{V}_{\theta}(x_{i} \mid X_{i-1})} \Bigg] < 0,
    \end{equation}
    then $\hat{\theta}_\ML^{f_0}(X_n) \notin \Delta$ for every $f_0 \in B$ when $n$ is sufficiently large.
  \item If $B$ is a set of real-valued functions on $\Omega$ such that
    \begin{equation} \label{eq:variance-decay-general-cv-new}
    \limsup_{n \to \infty} \sup_{ \theta \in \Delta } \sup_{f_0 \in B} \sum_{i=1}^n \bigg[ \frac{(f_0(x_{i}) - \mu_{\theta_0, f_0}(x_i \mid X_n^i))^2}{\mathbb{V}_{\theta_0}( x_i \mid X_n^i )} + \log \frac{\mathbb{V}_{\theta_0}(x_{i} \mid X_{n}^i)}{\mathbb{V}_{\theta}(x_{i} \mid X_{n}^{i})} \bigg] < 0,
    \end{equation}
    then $\hat{\theta}_\CV^{f_0}(X_n) \notin \Delta$ for every $f_0 \in B$ when $n$ is sufficiently large.
  \end{enumerate}
\end{theorem}
\begin{proof}
  Let us consider maximum likelihood estimation first.
  Let $\theta \in \Theta$. \Cref{lemma:logdet,lemma:minimum-norm-interpolation} yield
  \begin{equation*}
    \begin{split}
      \ell_\ML^{f_0}(\theta_0 \mid X_n) ={}& \norm[0]{\mu_{\theta_0, f_0}(\cdot \mid X_n)}_{\theta_0}^2 + \log \det K_{\theta_0}(X_n) \\
      \leq{}& \norm[0]{\mu_{\theta, f_0}(\cdot \mid X_n)}_{\theta}^2 + \norm[0]{\mu_{\theta_0, f_0}(\cdot \mid X_n)}_{\theta_0}^2 + \log \det K_{\theta_0}(X_n) \\
      ={}& \norm[0]{\mu_{\theta, f_0}(\cdot \mid X_n)}_{\theta}^2 + \norm[0]{\mu_{\theta_0, f_0}(\cdot \mid X_n)}_{\theta_0}^2 + \sum_{i=1}^{n} \log \mathbb{V}_{\theta_0}(x_{i} \mid X_{i-1}) \\
      ={}& \norm[0]{\mu_{\theta, f_0}(\cdot \mid X_n)}_{\theta}^2 + \norm[0]{\mu_{\theta_0, f_0}(\cdot \mid X_n)}_{\theta_0}^2 \\
      &+ \sum_{i=1}^{n} \bigg[  \log \mathbb{V}_{\theta}(x_{i} \mid X_{i-1}) + \log \frac{ \mathbb{V}_{\theta_0}(x_{i} \mid X_{i-1}) }{ \mathbb{V}_{\theta}(x_{i} \mid X_{i-1}) } \bigg] \\
      ={}& \ell_\ML^{f_0}(\theta \mid X_n) + \norm[0]{\mu_{\theta_0, f_0}(\cdot \mid X_n)}_{\theta_0}^2 + \sum_{i=1}^{n} \log \frac{ \mathbb{V}_{\theta_0}(x_{i} \mid X_{i-1}) }{ \mathbb{V}_{\theta}(x_{i} \mid X_{i-1}) }.
    \end{split}
  \end{equation*}
  It now follows from~\eqref{eq:variance-decay-general-ml-new} that $\ell_\ML^{f_0}(\theta_0 \mid X_n) < \inf_{\theta \in \Delta} \ell_\ML^{f_0}(\theta \mid X_n)$ for every $f_0 \in B$ when $n$ is sufficiently large, which means that the maximum likelihood estimate, being a minimiser of $\ell_\ML^{f_0}(\cdot \mid X_n)$, must be outside of $\Delta$ for all $f_0 \in B$ when $n$ is sufficiently large.
  The proof for cross-validation is analogous.
\end{proof}

\rev{The role of $\theta_0$ in \Cref{thm:general-new} is somewhat subtle. 
For $\theta = \theta_0$ the logarithmic terms in~\eqref{eq:variance-decay-general-ml-new} and~\eqref{eq:variance-decay-general-cv-new} are non-negative.
Because the non-logarithmic terms are always non-negative, this means that $\theta_0$ cannot be an element of $\Delta$.
Therefore one should select $\theta_0$ such that the set $\Delta$ can be made as large as possible.
When we consider smoothness estimation for Matérns in \Cref{sec:matern}, assumptions~\eqref{eq:variance-decay-general-ml-new} and~\eqref{eq:variance-decay-general-cv-new} are verified by taking $\theta_0$ and $\Delta$ such that [specifically, see~\eqref{eq:matern-proof-var-ratio} and~\eqref{eq:matern-proof-var-ratio-2}]
\begin{equation} \label{eq:ratio-goes-to-neg-inf}
  \lim_{n \to \infty} \sup_{\theta \in \Delta} \frac{ \mathbb{V}_{\theta_0}(x_{n} \mid X_{n-1}) }{ \mathbb{V}_{\theta}(x_{n} \mid X_{n-1}) } = 0,
\end{equation}
which states that the conditional variance should decay faster for the parameter $\theta_0$ than for any parameter $\theta \in \Delta$.
Because the conditional variance is the supremum over the unit ball of the RKHS by the equivalence in~\eqref{eq:var-is-wce}, the limit in~\eqref{eq:ratio-goes-to-neg-inf} implies that $H(K_{\theta_0})$ is \emph{essentially} smaller as a set than $H(K_\theta)$ for any $\theta \in \Delta$.
Note that~\eqref{eq:ratio-goes-to-neg-inf} is not necessary for~\eqref{eq:variance-decay-general-ml-new} and~\eqref{eq:variance-decay-general-cv-new} to hold.
For example, RKHSs of two Matérn kernels in~\eqref{eq:matern} with any positive scale parameters $\lambda_1$ and~$\lambda_2$ are norm-equivalent; see~\eqref{eq:rkhs-fourier-transform} and~\eqref{eq:matern-fourier}.
If one is estimating the scale parameter and $\Delta$ is an interval bounded away from zero and infinity, the ratio in~\eqref{eq:ratio-goes-to-neg-inf} cannot tend to zero.}

\rev{For maximum likelihood estimation it is not necessary to use \Cref{lemma:logdet} to decompose the determinant as a sum of variances.}
By writing
\begin{equation*}
  \log \det K_{\theta_0}(X_n) = \log \det K_{\theta}(X_n) + \log \det \big[ K_{\theta_0}(X_n) K_{\theta}(X_n)^{-1} \big],
\end{equation*} 
we could have replaced~\eqref{eq:variance-decay-general-ml-new} with the equivalent condition
\begin{equation} \label{eq:det-decay-general-ml-new}
  \limsup_{n \to \infty} \sup_{ \theta \in \Delta } \sup_{f_0 \in B} \Bigg[ \norm[0]{\mu_{\theta_0, f_0}(\cdot \mid X_n)}_{\theta_0}^2 + \log \frac{\det K_{\theta_0}(X_n)}{ \det K_\theta(X_n)} \Bigg] < 0.
\end{equation}
Because the kernel matrix determinant is known as the \emph{model complexity} and the \emph{data-fit term} $\norm[0]{\mu_{\theta_0, f_0}(\cdot \mid X_n)}_{\theta_0}$ quantifies how well the model fits the data, the condition~\eqref{eq:det-decay-general-ml-new} gives the following interpretation of \Cref{thm:general-new} for maximum likelihood estimation: If there is a parameter $\theta_0$ such that the model for this parameter fits the data sufficiently well (i.e., the data-fit is bounded or grows slowly) and each parameter in $\Delta$ corresponds to a model more complex than that for $\theta_0$ (i.e., the log-ratio of model complexities is sufficiently small or tends to negative infinity sufficiently fast), then the parameter estimate cannot be contained in $\Delta$.
That is, maximum likelihood estimation prefers simple models that fit the data well.

The following corollary is a specialisation of \Cref{thm:general-new} to a setting where $\Theta$ is an interval, which we take to be any connected subset of $\R$, and $\theta$ can be thought of as a smoothness parameter, so that $H(K_{\theta_1}) \subsetneq H(K_{\theta_2})$ whenever $\theta_1 > \theta_2$.\footnote{See \citet[Section~3.2]{Gualtierotti2015} for a general treatment of such \emph{contractive inclusions} of RKHSs.}
Under suitable conditions, the implication $\theta_0 \notin \Delta$ may then be expressed as an inequality that provides an asymptotic lower bound on the smoothness estimates.

\begin{corollary} \label{cor:general-interval-new}
  Let $\Theta \subset \R$ be an interval and $\theta_0 \in \Theta$.
  \begin{enumerate}
  \item If $B$ is a set of real-valued functions on $\Omega$ such that
    \begin{equation*}
    \limsup_{n \to \infty} \sup_{ \theta \leq \theta_1 } \sup_{f_0 \in B} \Bigg[ \norm[0]{\mu_{\theta_0, f_0}(\cdot \mid X_n)}_{\theta_0}^2 + \sum_{i=1}^n \log \frac{\mathbb{V}_{\theta_0}(x_{i} \mid X_{i-1})}{\mathbb{V}_{\theta}(x_{i} \mid X_{i-1})} \Bigg] < 0 \quad
    \end{equation*}
    for every $\theta_1 < \theta_0$, then 
    \begin{equation*}
      \liminf_{n \to \infty} \inf_{f_0 \in B} \hat{\theta}_\ML^{f_0}(X_n) \geq \theta_0.
    \end{equation*}
  \item If $B$ is a set of real-valued functions on $\Omega$ such that
    \begin{equation*}
    \limsup_{n \to \infty} \sup_{ \theta \leq \theta_1 } \sup_{f_0 \in B} \sum_{i=1}^n \bigg[ \frac{(f_0(x_{i}) - \mu_{\theta_0, f_0}(x_i \mid X_n^i))^2}{\mathbb{V}_{\theta_0}( x_i \mid X_n^i )} + \log \frac{\mathbb{V}_{\theta_0}(x_{i} \mid X_{n}^i)}{\mathbb{V}_{\theta}(x_{i} \mid X_{n}^{i})} \bigg] < 0
    \end{equation*}
    for every $\theta_1 < \theta_0$, then
    \begin{equation*}
      \liminf_{n \to \infty} \inf_{f_0 \in B} \hat{\theta}_\CV^{f_0}(X_n) \geq \theta_0.
    \end{equation*}
  \end{enumerate}
\end{corollary}
\begin{proof}
  Let $\Theta$ be an interval with endpoints $a_\Theta \leq b_\Theta$ that are possibly infinite.
  The claim follows by applying \Cref{thm:general-new} to $\Delta = \Theta \cap [a_\Theta, \theta_1]$ for each $\theta_1 < \theta_0$ and using the definition of the lower limit.
\end{proof}

In \Cref{sec:parameter-estimation} we shall assume that $B$ is a subset of $H(K_{\theta_0})$, which simplifies the role of $\theta_0$ but renders the results somewhat sub-optimal.

\subsubsection{Upper Bounds}

The following theorem yields upper bounds on smoothness parameter estimates.

\begin{theorem} \label{thm:upper-bounds-general}
  Let $\Sigma \subset \Theta$ and suppose that $B$ is a bounded subset of $H(K_{\theta_B})$ for some $\theta_B \in \Theta$.
  \begin{enumerate}
  \item If 
    \begin{equation} \label{eq:upper-bound-general-limsup-ml}
      \limsup_{n \to \infty} \mathbb{V}_{\theta_B}(x_n \mid X_{n-1}) < 1
    \end{equation}
    and
    \begin{equation} \label{eq:variance-decay-general-ml-new-upper}
      \liminf_{n \to \infty} \inf_{\theta \in \Sigma} \inf_{f_0 \in B} \ell_\ML^{f_0}(\theta \mid X_n) > -\infty,
    \end{equation}
    then $\hat{\theta}_\ML^{f_0}(X_n) \notin \Sigma$ for every $f_0 \in B$ when $n$ is sufficiently large.
  \item Let $b = \sup_{f_0 \in B} \norm[0]{f_0}_{\theta_B}$. If 
    \begin{equation} \label{eq:upper-bound-general-limsup-cv}
      \limsup_{n \to \infty} \max_{1 \leq i \leq n} \mathbb{V}_{\theta_B}(x_i \mid X_n^i) < \exp(-b^2)
    \end{equation}
    and
    \begin{equation} \label{eq:variance-decay-general-cv-new-upper}
      \liminf_{n \to \infty} \inf_{\theta \in \Sigma} \inf_{f_0 \in B} \ell_\CV^{f_0}(\theta \mid X_n) > -\infty,
    \end{equation}
    then $\hat{\theta}_\CV^{f_0}(X_n) \notin \Sigma$ for every $f_0 \in B$ when $n$ is sufficiently large.
  \end{enumerate}
\end{theorem}
\begin{proof}
  Let $b = \sup_{f_0 \in B} \norm[0]{f_0}_{\theta_B} < \infty$.
  Let us consider maximum likelihood estimation first.
  By \Cref{lemma:minimum-norm-interpolation} and~\eqref{eq:upper-bound-general-limsup-ml},
  \begin{equation*}
    \ell_\ML^{f_0}( \theta_B \mid X_n ) = \norm[0]{\mu_{\theta_B, f_0}(\cdot \mid X_n)}_{\theta_B}^2 + \sum_{i=1}^n \log \mathbb{V}_{\theta_B}(x_{i} \mid X_{i-1}) \leq b^2 + \sum_{i=1}^n \log \mathbb{V}_{\theta_B}(x_{i} \mid X_{i-1})
  \end{equation*}
  tends to negative infinity as $n \to \infty$ uniformly over $f_0 \in B$.
  It thus follows from~\eqref{eq:variance-decay-general-ml-new-upper} that 
  \begin{equation*}
    \sup_{f_0 \in B} \ell_\ML^{f_0}( \theta_B \mid X_n ) < \inf_{\theta \in \Sigma} \inf_{f_0 \in B} \ell_\ML^{f_0}(\theta \mid X_n)
  \end{equation*}
  for all sufficiently large $n$, which gives the claim for maximum likelihood estimation.
  The proof for cross-validation is analogous, except that now we use~\eqref{eq:rkhs-error} to get
  \begin{equation*}
    \begin{split}
      \ell_{\CV}^{f_0}( \theta_B \mid X_n ) &= \sum_{i=1}^n \bigg[ \frac{(f_0(x_{i}) - \mu_{\theta_B, f_0}(x_i \mid X_n^i))^2}{\mathbb{V}_{\theta_B}( x_i \mid X_n^i )} + \log \mathbb{V}_{\theta_B}( x_{i} \mid X_n^i ) \bigg] \\
      &\leq \sum_{i=1}^n \big[ b^2 + \log \mathbb{V}_{\theta_B}( x_{i} \mid X_n^i ) \big],
    \end{split}
  \end{equation*}
  which tends to negative infinity as $n \to \infty$ by~\eqref{eq:upper-bound-general-limsup-cv}.
\end{proof}

\rev{The assumptions~\eqref{eq:upper-bound-general-limsup-ml} and~\eqref{eq:upper-bound-general-limsup-cv} usually hold.
For instance, if $\Omega$ is a compact metric space, the sequence $\{ x_i \}_{i=1}^\infty$ is dense in $\Omega$, and $K_\theta$ is continuous, then $\sup_{ x \in \Omega } \mathbb{V}_\theta( x \mid X_n ) \to 0$ as $n \to \infty$.
That~\eqref{eq:upper-bound-general-limsup-cv} can be likely improved somewhat is discussed later in \Cref{rmk:cv-bounds-are-bad}.
The gist of \Cref{thm:upper-bounds-general} is in the interplay between $\theta_B$ and $\Sigma$: By~\eqref{eq:upper-bound-general-limsup-ml} and~\eqref{eq:upper-bound-general-limsup-cv}, both $\smash{\ell_\ML^{f_0}(\theta_B \mid X_n)}$ and $\smash{\ell_\CV^{f_0}(\theta_B \mid X_n)}$ tend to negative infinity, which may be interpreted as $\theta_B$ being a plausible parameter estimate.
The assumptions~\eqref{eq:variance-decay-general-ml-new-upper} and~\eqref{eq:variance-decay-general-cv-new-upper} then state that no parameter in $\Sigma$ is plausible, which limits the size of $\Delta$ by prohibiting $\theta \in \Delta$ such that $H(K_{\theta_B}) \subset H(K_\theta)$.
For if this inclusion were true and $\mathbb{V}_{\theta}(x_{n} \mid X_{n-1})$ tended to zero, both
\begin{equation*}
    \ell_\ML^{f_0}( \theta \mid X_n ) = \norm[0]{\mu_{\theta, f_0}(\cdot \mid X_n)}_{\theta}^2 + \sum_{i=1}^n \log \mathbb{V}_{\theta}(x_{i} \mid X_{i-1}) \leq \norm[0]{ f_0 }_\theta^2 + \sum_{i=1}^n \log \mathbb{V}_{\theta}(x_{i} \mid X_{i-1})
\end{equation*}
and $\smash{\ell_\CV^{f_0}( \theta \mid X_n )}$ would tend to negative infinity, thus violating~\eqref{eq:variance-decay-general-ml-new-upper} and~\eqref{eq:variance-decay-general-cv-new-upper}.} The following corollary provides a version of \Cref{thm:upper-bounds-general} adapted to intervals and complements \Cref{cor:general-interval-new}.

\begin{corollary} \label{cor:upper-bounds-interval}
Let $\Theta \subset \R$ be an interval and $\theta_0 \in \Theta$.
Suppose that $B$ is a bounded subset of $H(K_{\theta_B})$ for some $\theta_B \in \Theta$.
  \begin{enumerate}
  \item If 
    \begin{equation*}
      \limsup_{n \to \infty} \mathbb{V}_{\theta_B}(x_n \mid X_{n-1}) < 1 \quad \text{ and } \quad \liminf_{n \to \infty} \inf_{\theta \geq \theta_1} \inf_{f_0 \in B} \ell_\ML^{f_0}(\theta \mid X_n) > -\infty,
    \end{equation*}
    for every $\theta_1 > \theta_0$, then
    \begin{equation*}
      \limsup_{n \to \infty} \sup_{f_0 \in B} \hat{\theta}_\ML^{f_0}(X_n) \leq \theta_0.
    \end{equation*}
  \item Let $b = \sup_{f_0 \in B} \norm[0]{f_0}_{\theta_B}$. If 
    \begin{equation*}
      \limsup_{n \to \infty} \max_{1 \leq i \leq n} \mathbb{V}_{\theta_B}(x_i \mid X_n^i) < \exp(-b^2) \: \text{ and } \: \liminf_{n \to \infty} \inf_{\theta \geq \theta_1} \inf_{f_0 \in B} \ell_\CV^{f_0}(\theta \mid X_n) > -\infty,
    \end{equation*}
    for every $\theta_1 > \theta_0$, then
    \begin{equation*}
      \limsup_{n \to \infty} \sup_{f_0 \in B} \hat{\theta}_\CV^{f_0}(X_n) \leq \theta_0.
    \end{equation*}
  \end{enumerate}
\end{corollary}

Unlike the assumption that $B \subset H(K_{\theta_0})$ in the next section, the purpose of the assumption in \Cref{thm:upper-bounds-general} and \Cref{cor:upper-bounds-interval} that $B$ be a subset of \emph{some} RKHS is only to guarantee that the objective functions do not tend to negative infinity for all possible parameters.

\subsection{Parameter Estimation in an RKHS Setting} \label{sec:parameter-estimation}

By assuming that $B \subset H(K_{\theta_0})$ we obtain a weaker version of \Cref{thm:general-new}.

\begin{theorem} \label{thm:general}
  Let $\Delta \subset \Theta$ and $\theta_0 \in \Theta$.
  Suppose that $B$ is a bounded subset of $H(K_{\theta_0})$.
  \begin{enumerate}
  \item If
    \begin{equation} \label{eq:variance-decay-general-ml}
    \limsup_{n \to \infty} \, \sup_{ \theta \in \Delta } \frac{\mathbb{V}_{\theta_0}(x_{n} \mid X_{n-1})}{\mathbb{V}_{\theta}(x_{n} \mid X_{n-1})} < 1,
    \end{equation}
    then $\hat{\theta}_\ML(X_n) \notin \Delta$ for every $f_0 \in B$ when $n$ is sufficiently large.
  \item Let $b = \sup_{f_0 \in B} \norm[0]{f_0}_{\theta_0}$. If
    \begin{equation} \label{eq:variance-decay-general-cv}
      \limsup_{n \to \infty} \, \sup_{ \theta \in \Delta } \max_{ 1 \leq i \leq n} \frac{\mathbb{V}_{\theta_0}(x_{i} \mid X_{n}^i)}{\mathbb{V}_{\theta}(x_{i} \mid X_{n}^{i})} < \exp(-b^2), 
    \end{equation}
    then $\hat{\theta}_\CV(X_n) \notin \Delta$ for every $f_0 \in B$ when $n$ is sufficiently large.
  \end{enumerate}
\end{theorem}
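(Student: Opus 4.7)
The plan is to show that $\ell_\ML(\theta \mid X_n) - \ell_\ML(\theta_0 \mid X_n) \to +\infty$, and analogously for $\ell_\CV$, uniformly over $\theta \in \Delta$; this forces the respective minimizer out of $\Delta$ for all sufficiently large $n$. First I would fix $c < 1$ and $N_0$ as supplied by the hypothesis, so that the relevant variance ratio is at most $c$ for every $n \geq N_0$ and every $\theta \in \Delta$, and hence each corresponding logarithmic variance difference is bounded below by $|\log c| > 0$.

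For part 1, I would combine \Cref{lemma:minimum-norm-interpolation,lemma:logdet} to bound the ML objective difference. The first lemma supplies the $n$-independent bound $f_0(X_n)^\T K_{\theta_0}(X_n)^{-1} f_0(X_n) \leq \norm[0]{f_0}_{\theta_0}^2$, while the corresponding quadratic form at $\theta \in \Delta$ is non-negative, and the second rewrites the log-determinant difference as the telescoping sum $\sum_{i=1}^n \log[\mathbb{V}_\theta(x_i \mid X_{i-1})/\mathbb{V}_{\theta_0}(x_i \mid X_{i-1})]$. Applying the uniform lower bound on each summand with $i \geq N_0$ then yields
\begin{equation*}
  \ell_\ML(\theta \mid X_n) - \ell_\ML(\theta_0 \mid X_n) \geq -\norm[0]{f_0}_{\theta_0}^2 + (n - N_0 + 1) |\log c|
\end{equation*}
up to a bounded contribution from the first $N_0 - 1$ terms. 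The right-hand side grows linearly in $n$, so no $\theta \in \Delta$ can be the ML minimizer once $n$ is large enough.

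Part 2 follows the same template with the leave-one-out quantities. I would use the RKHS error estimate~\eqref{eq:rkhs-error} to control each residual contribution at $\theta_0$ by $\norm[0]{f_0}_{\theta_0}^2$, observe that the analogue at $\theta \in \Delta$ is non-negative, and invoke the maximum form of the hypothesis for a per-index lower bound $|\log c|$ on the LOO log-variance differences, so the log-variance sum in $\ell_\CV(\theta) - \ell_\CV(\theta_0)$ exceeds zero by at least $n|\log c|$. The main obstacle I anticipate lies exactly here: the crude per-index residual bound contributes $n \norm[0]{f_0}_{\theta_0}^2$ on the unfavourable side, so the argument as sketched only succeeds immediately when $|\log c| > \norm[0]{f_0}_{\theta_0}^2$. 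I expect this to be sharpened by replacing the per-index bound with an aggregate bound on $\sum_i (f_0(x_i) - \mu_{\theta_0, f_0}(x_i \mid X_n^i))^2 / \mathbb{V}_{\theta_0}(x_i \mid X_n^i)$ of order smaller than $n|\log c|$, using the RKHS structure of $f_0 \in H(K_{\theta_0})$. A milder issue common to both parts is the uniform control of the first $N_0 - 1$ log-variance ratios on $\Delta$; since these are fixed in $n$, they are absorbed by the linear growth provided the hypothesis implicitly extends to all $n$.
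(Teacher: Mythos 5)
Your part~1 is correct and is essentially the paper's own argument: combine \Cref{lemma:minimum-norm-interpolation,lemma:logdet} to get $\ell_\ML(\theta_0 \mid X_n) \leq \ell_\ML(\theta \mid X_n) + \norm[0]{f_0}_{\theta_0}^2 + \sum_{i=1}^{n} \log \big[ \mathbb{V}_{\theta_0}(x_{i} \mid X_{i-1}) / \mathbb{V}_{\theta}(x_{i} \mid X_{i-1}) \big]$, then note that, uniformly over $\Delta$, the sum is eventually dominated by $(n - n_0)\log c$ with $c<1$. Your side remark about the first $n_0-1$ terms is the same implicit step the paper takes (one needs $\sup_{\theta\in\Delta}$ of each fixed early ratio to be finite), and it is harmless.

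For part~2 you have put your finger on a genuine gap and you do not close it. The per-index application of \eqref{eq:rkhs-error} only yields $\sum_{i=1}^n (f_0(x_i)-\mu_{\theta_0,f_0}(x_i\mid X_n^i))^2/\mathbb{V}_{\theta_0}(x_i\mid X_n^i) \leq n\norm[0]{f_0}_{\theta_0}^2$, and a term of size $n\norm[0]{f_0}_{\theta_0}^2$ is beaten by $n\log c$ only when $c<\exp(-\norm[0]{f_0}_{\theta_0}^2)$, which \eqref{eq:variance-decay-general-cv} does not give. Be aware that the paper's displayed chain makes exactly this leap: it bounds the residual sum by $\sum_{i=1}^n\norm[0]{f_0}_{\theta_0}^2$ and then records the aggregate contribution as $\norm[0]{f_0}_{\theta_0}^2$ rather than $n\norm[0]{f_0}_{\theta_0}^2$ without comment, so your objection applies to the published proof as well. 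The sharpening you gesture at can be made concrete. Since $f_0-\mu_{\theta_0,f_0}(\cdot\mid X)$ is orthogonal in $H(K_{\theta_0})$ to the span of $\{K_{\theta_0}(\cdot,x)\}_{x\in X}$, the estimate \eqref{eq:rkhs-error} improves to $\abs[0]{f_0(x_i)-\mu_{\theta_0,f_0}(x_i\mid X_n^i)}^2 \leq \big(\norm[0]{f_0}_{\theta_0}^2-\norm[0]{\mu_{\theta_0,f_0}(\cdot\mid X_{i-1})}_{\theta_0}^2\big)\,\mathbb{V}_{\theta_0}(x_i\mid X_n^i)$, using $X_{i-1}\subset X_n^i$ and the fact that projecting onto a larger subspace cannot increase the residual norm. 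The prefactors form a non-increasing sequence independent of $n$, so the residual sum is $o(n)$ whenever $\mu_{\theta_0,f_0}(\cdot\mid X_m)\to f_0$ in $H(K_{\theta_0})$ (true, e.g., when the points are dense and $H(K_{\theta_0})$ consists of continuous functions), and $o(n)+n\log c\to-\infty$ then completes the argument. Without some such additional input, the cross-validation claim does not follow from the template you set up.
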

\begin{proof}
  Let $b = \sup_{f_0 \in B} \norm[0]{f_0}_{\theta_0} < \infty$.
  Let us consider maximum likelihood estimation first.
  Because $B \subset H(K_{\theta_0})$, we get from \Cref{lemma:minimum-norm-interpolation} that
  \begin{equation*}
    \begin{split}
    \sup_{ \theta \in \Delta } \sup_{f_0 \in B} \Bigg[ \norm[0]{\mu_{\theta_0, f_0}(\cdot \mid X_n)}_{\theta_0}^2 + \sum_{i=1}^n &\log \frac{\mathbb{V}_{\theta_0}(x_{i} \mid X_{i-1})}{\mathbb{V}_{\theta}(x_{i} \mid X_{i-1})} \Bigg] \\
&\leq b^2 + \sum_{i=1}^n \sup_{\theta \in \Delta} \log \frac{\mathbb{V}_{\theta_0}(x_{i} \mid X_{i-1})}{\mathbb{V}_{\theta}(x_{i} \mid X_{i-1})},
    \end{split}
  \end{equation*}
  which tends to negative infinity as $n \to \infty$ by~\eqref{eq:variance-decay-general-ml}.
  Therefore~\eqref{eq:variance-decay-general-ml-new} holds, so that the claim follows from \Cref{thm:general-new}.
  Let us then consider cross-validation. 
  Because $B \subset H(K_{\theta_0})$, we may use~\eqref{eq:rkhs-error} to get
  \begin{equation*}
    \begin{split}
      \sup_{ \theta \in \Delta } \sum_{i=1}^n &\bigg[ \frac{(f_0(x_{i}) - \mu_{\theta_0, f_0}(x_i \mid X_n^i))^2}{\mathbb{V}_{\theta_0}( x_i \mid X_n^i )} + \log \frac{\mathbb{V}_{\theta_0}(x_{i} \mid X_{n}^i)}{\mathbb{V}_{\theta}(x_{i} \mid X_{n}^{i})} \bigg] \\
      &\leq\sum_{i=1}^n \bigg[ b^2 + \sup_{\theta \in \Delta} \log \frac{\mathbb{V}_{\theta_0}(x_{i} \mid X_{n}^i)}{\mathbb{V}_{\theta}(x_{i} \mid X_{n}^{i})} \bigg] \leq n \bigg[ b^2 + \sup_{ \theta \in \Delta } \max_{ 1 \leq i \leq n} \log \frac{\mathbb{V}_{\theta_0}(x_{i} \mid X_{n}^i)}{\mathbb{V}_{\theta}(x_{i} \mid X_{n}^{i})} \bigg],
    \end{split}
  \end{equation*}
  which tends to negative infinity as $n \to \infty$ by~\eqref{eq:variance-decay-general-cv}.
  Therefore~\eqref{eq:variance-decay-general-cv-new} holds, so that the claim follows from \Cref{thm:general-new}.
\end{proof}

\begin{remark} \label{rmk:cv-bounds-are-bad}
  Suppose for simplicity that $B = \{f_0\}$.
  It is likely that~\eqref{eq:variance-decay-general-cv}, and similarly~\eqref{eq:upper-bound-general-limsup-cv},  can be improved to requiring simply that the upper limit be less than one.
  For had we used~\eqref{eq:interpolant-norm-expansion} and~\eqref{eq:rkhs-error} and proceeded as we did in the case of cross-validation, we would have arrived at the similar assumption
  \begin{equation*}
    \limsup_{n \to \infty} \, \sup_{ \theta \in \Delta } \frac{\mathbb{V}_{\theta_0}(x_{n} \mid X_{n-1})}{\mathbb{V}_{\theta}(x_{n} \mid X_{n-1})} < \exp(-\norm[0]{f_0}_{\theta_0}^2)
  \end{equation*}
  for maximum likelihood estimation from 
  \begin{equation*}
    \begin{split}
    \sup_{ \theta \in \Delta } \Bigg[ &\norm[0]{\mu_{\theta_0, f_0}(\cdot \mid X_n)}_{\theta_0}^2 + \sum_{i=1}^n \log \frac{\mathbb{V}_{\theta_0}(x_{i} \mid X_{i-1})}{\mathbb{V}_{\theta}(x_{i} \mid X_{i-1})} \Bigg] \\
&= \sup_{ \theta \in \Delta } \sum_{i=1}^n \bigg[ \frac{(f_0(x_i) - \mu_{\theta,f_0}(x_i \mid X_{i-1}))^2}{\mathbb{V}_\theta( x_i \mid X_{i-1})} + \log \frac{\mathbb{V}_{\theta_0}(x_{i} \mid X_{i-1})}{\mathbb{V}_{\theta}(x_{i} \mid X_{i-1})} \bigg] \\
&\leq n \norm[0]{f_0}_{\theta_0}^2 + \sum_{i=1}^n \sup_{ \theta \in \Delta } \log \frac{\mathbb{V}_{\theta_0}(x_{i} \mid X_{i-1})}{\mathbb{V}_{\theta}(x_{i} \mid X_{i-1})}.
    \end{split}
  \end{equation*}
  This indicates that using~\eqref{eq:rkhs-error} should be avoided. 
It is straightforward to improve~\eqref{eq:rkhs-error} to $\abs[0]{f(x) - \mu_{\theta,f}(x \mid X)} \leq \norm[0]{f - \mu_{\theta,f}(\cdot \mid X)}_\theta \mathbb{V}_\theta(x \mid X)^{1/2}$. However, controlling the RKHS norm $\norm[0]{f - \mu_{\theta,f}(\cdot \mid X)}_\theta$ is challenging.
\end{remark}

To see that \Cref{thm:general} is weaker than \Cref{thm:general-new}, observe that assumptions~\eqref{eq:variance-decay-general-ml-new} and~\eqref{eq:variance-decay-general-cv-new} can hold even when
  \begin{equation*}
    \norm[0]{\mu_{\theta_0, f_0}(\cdot \mid X_n)}_{\theta_0}^2 \to \infty \quad \text{ or } \quad \frac{(f_0(x_{i}) - \mu_{\theta_0, f_0}(x_i \mid X_n^i))^2}{\mathbb{V}_{\theta_0}( x_i \mid X_n^i )} \to \infty,
  \end{equation*}
which, as we saw in the proof of \Cref{thm:general}, cannot happen if $B \subset H(K_{\theta_0})$.
This weakness of \Cref{thm:general} is explained by the fact that $B \subset H(K_{\theta_0})$ is a ``wrong'' assumption to make because the samples of a Gaussian process with covariance kernel $K_\theta$ are \emph{not} elements of $H(K_\theta)$ but of a somewhat larger RKHS~(\citealt{Driscoll1973}; we discuss this more in \Cref{sec:driscoll,sec:sample-paths}). 
That is, maximum likelihood estimation and cross-validation do not attempt to find $\theta_0$ such that $B \subset H(K_{\theta_0})$ but $\theta_0$ for which the elements of $B$ resemble, in some sense, the samples of a Gaussian process with covariance kernel $K_{\theta_0}$.
We shall see this phenomenon in action in \Cref{sec:matern} because for Matérn kernels on $\R^d$ the samples have $d/2$ orders of smoothness less than the RKHS.
By applying \Cref{thm:general} to the setting where $\Theta$ is an interval we obtain the following corollary.

\begin{corollary} \label{cor:general-interval}
  Let $\Theta \subset \R$ be an interval and $\theta_0 \in \Theta$.
  Suppose that $B$ is a bounded subset of $H(K_{\theta_0})$.
  \begin{enumerate}
  \item If
      \begin{equation} \label{eq:variance-decay-interval-ml}
    \limsup_{n \to \infty} \, \sup_{ \theta \leq \theta_1 } \frac{\mathbb{V}_{\theta_0}(x_{n} \mid X_{n-1})}{\mathbb{V}_{\theta}(x_{n} \mid X_{n-1})} < 1 \quad \text{ for every } \quad \theta_1 < \theta_0,
      \end{equation}
      then
      \begin{equation*}
        \liminf_{n \to \infty} \inf_{f_0 \in B} \hat{\theta}_\ML^{f_0}(X_n) \geq \theta_0.
      \end{equation*}
  \item Let $b = \sup_{f_0 \in B} \norm[0]{f_0}_{\theta_0}$. If
      \begin{equation} \label{eq:variance-decay-interval-cv}
        \limsup_{n \to \infty} \, \sup_{ \theta \leq \theta_1 } \max_{ 1 \leq i \leq n} \frac{\mathbb{V}_{\theta_0}(x_{i} \mid X_{n}^i)}{\mathbb{V}_{\theta}(x_{i} \mid X_{n}^{i})} < \exp(-b^2) \quad \text{ for every } \quad \theta_1 < \theta_0,
      \end{equation}
      then
      \begin{equation*}
        \liminf_{n \to \infty} \inf_{f_0 \in B} \hat{\theta}_\CV^{f_0}(X_n) \geq \theta_0.
      \end{equation*}
  \end{enumerate}
\end{corollary}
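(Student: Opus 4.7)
The plan is to derive each part directly from \Cref{thm:general} by choosing $\Delta$ to be a lower sub-interval of $\Theta$ and then letting its right endpoint tend up to $\theta_0$.

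For the maximum likelihood claim I would fix an arbitrary $\theta_1 \in \Theta$ with $\theta_1 < \theta_0$ and take $\Delta = \{\theta \in \Theta : \theta \leq \theta_1\}$. Observing that $X_n^n = X_n \setminus \{x_n\} = X_{n-1}$, the hypothesis \eqref{eq:variance-decay-interval-ml} specialised to this particular $\theta_1$ is literally the hypothesis \eqref{eq:variance-decay-general-ml} of \Cref{thm:general} for this $\Delta$. That theorem therefore produces an $n_0 = n_0(\theta_1) \in \N$ such that $\hat{\theta}_\ML(X_n) \notin \Delta$ for every $n \geq n_0$; because $\Theta$ is an interval this is the same as $\hat{\theta}_\ML(X_n) > \theta_1$ for every such $n$, whence $\liminf_{n \to \infty} \hat{\theta}_\ML(X_n) \geq \theta_1$. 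Taking the supremum over all $\theta_1 \in \Theta$ with $\theta_1 < \theta_0$ then yields $\liminf_{n \to \infty} \hat{\theta}_\ML(X_n) \geq \theta_0$. The degenerate case where no such $\theta_1$ exists (i.e.\ $\theta_0$ is the left endpoint of $\Theta$) is trivial, since $\hat{\theta}_\ML(X_n) \in \Theta$ forces $\hat{\theta}_\ML(X_n) \geq \theta_0$ automatically.

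For the cross-validation claim I would run exactly the same argument with the same choice of $\Delta$, this time invoking the second part of \Cref{thm:general} and noting that the assumption \eqref{eq:variance-decay-interval-cv} coincides with \eqref{eq:variance-decay-general-cv} for this $\Delta$. I do not expect any substantive obstacle: the corollary is essentially a repackaging of \Cref{thm:general} in the case when the parameter space is totally ordered. The ``for every $\theta_1 < \theta_0$'' quantifier in the hypotheses is there precisely so that the single ``eventually outside $\Delta$'' conclusion of \Cref{thm:general} can be upgraded to a limit inferior bound by the standard supremum-over-sublevels step described above.
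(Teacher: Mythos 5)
Your proposal is correct and matches the paper's argument, which simply invokes \Cref{thm:general} with $\Delta = \{\theta \in \Theta : \theta \le \theta_1\}$ and the definition of the lower limit; you have merely spelled out the supremum-over-$\theta_1$ step and the identification $X_n^n = X_{n-1}$ that the paper leaves implicit.
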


\subsection{On Driscoll's Theorem} \label{sec:driscoll}

The determinantal condition~\eqref{eq:det-decay-general-ml-new} has a connection to sample path properties of Gaussian processes that is worth elucidating.
Because the product of two positive-definite matrices has positive eigenvalues, the inequality of arithmetic and geometric means yields
\begin{equation} \label{eq:AM-GM}
  \det \big[ K_{\theta_0}(X_n) K_{\theta}(X_n)^{-1} \big] \leq \bigg( \frac{1}{n} \mathrm{tr}\big[ K_{\theta_0}(X_n) K_{\theta}(X_n)^{-1} \big] \bigg)^n.
\end{equation}
From~\eqref{eq:det-decay-general-ml-new} and~\eqref{eq:AM-GM} we obtain the following variant of \Cref{thm:general} for maximum likelihood estimation.

\begin{theorem} \label{thm:general-trace}
  Let $\Delta \subset \Theta$ and $\theta_0 \in \Theta$.
  If $B$ is a set of real-valued functions on $\Omega$ such that
  \begin{equation} \label{eq:general-trace-condition}
    \limsup_{n \to \infty} \sup_{ \theta \in \Delta } \sup_{f_0 \in B} \Bigg[ \norm[0]{\mu_{\theta_0, f_0}(\cdot \mid X_n)}_{\theta_0}^2 + n \log \bigg( \frac{1}{n} \mathrm{tr}\big[ K_{\theta_0}(X_n) K_{\theta}(X_n)^{-1} \big] \bigg) \Bigg] < 0,
  \end{equation}
  then $\hat{\theta}_\ML^{f_0}(X_n) \notin \Delta$ for every $f_0 \in B$ when $n$ is sufficiently large.
\end{theorem}

Suppose that $\Omega$ is a separable metric space, that $\smash{\{x_i\}_{i=1}^\infty}$ is dense in $\Omega$, and that the kernels $K_{\theta_0}$ and $K_\theta$ are continuous.
\citet[Theorem~3]{Driscoll1973} has proved that, under certain additional assumptions,
\begin{equation} \label{eq:driscoll-condition}
  \lim_{ n \to \infty } \mathrm{tr}\big[ K_{\theta_0}(X_n) K_{\theta}(X_n)^{-1} \big] < \infty
\end{equation}
if and only if the samples from a Gaussian process with covariance kernel $K_{\theta_0}$ are contained in $H(K_\theta)$ with probability one.
In particular, setting $\theta = \theta_0$ shows that the samples are not contained in the RKHS of $K_{\theta_0}$.
See \citet{LukicBeder2001} for the equivalence of~\eqref{eq:driscoll-condition} to a \emph{nuclear dominance} condition between $K_\theta$ and $K_{\theta_0}$.
If the set $B$ is such that $\norm[0]{\mu_{\theta_0, f_0}(\cdot \mid X_n)}_{\theta_0}^2$ does not grow faster than linearly in $n$, which is the case if $B$ is a bounded subset of $H(K_{\theta_0})$, then~\eqref{eq:general-trace-condition} is implied by the following version of Driscoll's condition~\eqref{eq:driscoll-condition} that is uniform over $\theta \in \Delta$:
\begin{equation} \label{eq:driscoll-uniform}
  \limsup_{ n \to \infty } \sup_{\theta \in \Delta} \mathrm{tr}\big[ K_{\theta_0}(X_n) K_{\theta}(X_n)^{-1} \big] < C < \infty
\end{equation}
for some $C > 0$.
If~\eqref{eq:driscoll-uniform} holds, then
\begin{equation*} 
  \limsup_{ n \to \infty } \sup_{\theta \in \Delta} \bigg[ n \log \bigg( \frac{1}{n} \mathrm{tr}\big[ K_{\theta_0}(X_n) K_{\theta}(X_n)^{-1} \big] \bigg) \bigg] \leq n \log \frac{C}{n} = n \log C - n \log n,
\end{equation*}
which implies~\eqref{eq:general-trace-condition} if the RKHS norm is assumed to grow at most linearly.

\section{Smoothness Estimation for Matérns} \label{sec:matern}

In this section we apply the results of \Cref{sec:general} to estimation of the smoothness parameter $\nu$ of the Matérn class in~\eqref{eq:matern}.
We fix the positive magnitude and scale parameters $\sigma$ and $\lambda$ and write the Matérn kernel of smoothness $\nu > 0$ as
\begin{equation*}
  K_\nu(x, y) = \Phi_\nu( x - y ),
\end{equation*}
where the function
\begin{equation} \label{eq:Phi-matern}
  \Phi_\nu(z) = \sigma^2 c(\nu) \bigg( \frac{\sqrt{2\nu} \norm[0]{z} }{\lambda} \bigg)^{\!\nu} \mathcal{K}_\nu \bigg( \frac{\sqrt{2\nu} \norm[0]{z} }{\lambda} \bigg)
\end{equation}
is defined on $\R^d$.
Recall that $\mathcal{K}_\nu$ is the modified Bessel function of the second kind of order~$\nu$.
In most results of this section we shall consider smoothness estimation over the bounded interval $\Theta = [\nu_\textup{min}, \nu_\textup{max}]$ for $0 < \nu_\textup{min} \leq \nu_\textup{max} < \infty$ and employ the following assumptions on the positive scaling factor $c(\nu)$:
\begin{subequations}
  \begin{equation} \label{eq:matern-factor-assumption-lower}
    \inf_{ \nu \in [\nu_\textup{min}, \nu_\textup{max}] } c(\nu) > 0,
  \end{equation}
  and
  \begin{equation} \label{eq:matern-factor-assumption-upper}
    \sup_{ \nu \in [\nu_\textup{min}, \nu_\textup{max}] } c(\nu) < \infty.
  \end{equation}
\end{subequations}
These assumptions hold if the scaling factor is a continuous function of $\nu$.
Let $\Gamma$ denote the Gamma function.
The scaling factor $c(\nu) = 2^{1-\nu} / \Gamma(\nu)$, which is typically used because it ensures that $K_\nu$ tends pointwise to the Gaussian kernel
\begin{equation*}
  K(x, y) = \sigma^2 \exp\bigg(\!-\frac{\norm[0]{x-y}^2}{2\lambda^2} \bigg)
\end{equation*}
as $\nu \to \infty$~\citep[pp.\@~49--50]{Stein1999}, is obviously continuous.

We use $\lesssim_n$ (with $\gtrsim_n$ defined analogously) to denote an inequality that holds up to a constant factor for all $n \in \N$.
That is, $a_n \lesssim_n b_n$ means that there is a non-negative constant $C$ such that $a_n \leq C b_n$ for all $n$.
We write $a_n \asymp_n b_n$ if $a_n \lesssim_n b_n$ and $a_n \gtrsim_n b_n$.

\subsection{Sobolev Spaces} \label{sec:sobolev}

The Fourier transform of $f \in L^2(\R^d)$ is defined as $\widehat{f}(\xi) = \int_{\R^d} f(x) e^{-\mathrm{i} x^\T \xi} \dif x$.
For $\alpha > 0$, the \emph{Sobolev space} $H^\alpha(\R^d)$ consists of functions $f \in L^2(\R^d)$ such that
\begin{equation} \label{eq:sobolev-norm}
  \norm[0]{f}_{H^\alpha(\R^d)}^2 = \int_{\R^d} ( 1 + \norm[0]{\xi}^2)^\alpha \lvert \widehat{f}(\xi) \rvert^2 \dif \xi < \infty.
\end{equation}
By the Sobolev embedding theorem, assuming that $\alpha > d/2$ ensures that every element of $H^\alpha(\R^d)$ can be uniquely identified with a continuous function, in which case $H^\alpha(\R^d)$ can be interpreted as a space of functions rather than of their equivalence classes.
If $\alpha$ is an integer, the Sobolev space consists of functions whose weak derivatives up to order $\alpha$ exist and are in $L^2(\R^d)$.
Moreover, every function in $H^\alpha(\R^d)$ is $\lfloor \alpha - d/2 \rfloor$ times differentiable in the classical sense.
On a subset $\Omega$ of $\R^d$ the Sobolev space $H^{\alpha}(\Omega)$ is defined as the set of functions $f \colon \Omega \to \R$ for which there exists an \emph{extension} $f_e \in H^\alpha(\R^d)$ such that $f_e|_\Omega = f$.
The norm of $H^\alpha(\Omega)$ is
\begin{equation} \label{eq:sobolev-space-restricted}
  \norm[0]{f}_{H^\alpha(\Omega)} = \min\Set{ \norm[0]{f_e}_{H^\alpha(\R^d)}}{ f_e \in H^{\alpha}(\R^d) \text{ and } f_e|_\Omega = f}.
\end{equation}
We shall formulate all our auxiliary results in $H^\alpha(\R^d)$ and use $H^\alpha(\Omega)$ only in what we consider to be main results of this section.

The RKHS of any stationary kernel of the form $K_\theta(x, y) = \Phi_\theta(x - y)$ for an integrable and continuous $\Phi_\theta \colon \R^d \to \R$ can be expressed in terms of the Fourier transform of $\Phi_\theta$~\citep[Theorem~10.12]{Wendland2005}.
Namely, $H(K_\theta)$ contains those square-integrable and continuous functions $f \colon \R^d \to \R$ for which
\begin{equation} \label{eq:rkhs-fourier-transform}
  \norm[0]{f}_\theta^2 = \int_{\R^d} \frac{\lvert \widehat{f}(\xi) \rvert^2}{\widehat{\Phi}_\theta(\xi)} \dif \xi < \infty.
\end{equation}
The function $\Phi_\nu$ in~\eqref{eq:Phi-matern}, which defines the Matérn class, has the Fourier transform
\begin{equation} \label{eq:matern-fourier}
  \widehat{\Phi}_\nu(\xi) = \frac{1}{C_\nu} \bigg( \frac{2\nu}{\lambda^2} + \norm[0]{\xi}^2 \bigg)^{-(\nu + d/2)} \quad \text{ with } \quad C_\nu = \frac{\pi^{d/2}}{\sigma^2 c(\nu) 2^{\nu - 1} \Gamma(\nu + d/2)} \bigg( \frac{\lambda^2}{2\nu} \bigg)^\nu.
\end{equation}
Therefore the norm $\norm[0]{\cdot}_\nu$ of the Matérn RKHS $H(K_\nu)$ is
\begin{equation} \label{eq:matern-norm}
  \norm[0]{f}_\nu^2 = C_\nu \int_{\R^d} \bigg( \frac{2\nu}{\lambda^2} + \norm[0]{\xi}^2 \bigg)^{\nu + d/2} \lvert \widehat{f}(\xi) \rvert^2 \dif \xi.
\end{equation}
It is straightforward to compute that
\begin{equation} \label{eq:matern-norm-equivalence}
  C_\nu \norm[0]{f}_{H^{\nu + d/2}(\R^d)}^2 \min\{1, b_\nu \} \leq \norm[0]{f}_\nu^2 \leq C_\nu \max\{1, b_\nu\} \norm[0]{f}_{H^{\nu + d/2}(\R^d)}^2,
\end{equation}
where $b_\nu = (2\nu / \lambda^2)^{\nu + d/2}$.
This shows (as is well known) that $H(K_\nu)$ is \emph{norm-equivalent} to the Sobolev space $H^{\nu + d/2}(\R^d)$, which is to say that $H(K_\nu) = H^{\nu + d/2}(\R^d)$ as sets and the norms $\norm[0]{\cdot}_\nu$ and $\norm[0]{\cdot}_{H^{\nu + d/2}(\R^d)}$ are equivalent.

\subsection{Self-Similar Functions} \label{sec:self-similar-functions}

Lower bounds on smoothness parameter estimates that we prove apply to any Sobolev function.
But to obtain upper bounds we need to work with a class of self-similar functions that we define as follows.

\begin{definition} \label{def:self-similar}
Let $\beta > 0$.
We say that a function $f \in L^2(\R^d)$ is $\beta$-\emph{self-similar} if
\begin{equation} \label{eq:self-similar-upper}
  \sup_{\xi \in \R^d} \norm[0]{\xi}^{2\beta + d} \lvert \widehat{f}(\xi) \rvert^2 < \infty
\end{equation}
and there are positive constants $C$ and $R_0$ such that 
\begin{equation} \label{eq:self-similar-lower}
  \int_{\norm[0]{\xi} \geq R} \lvert \widehat{f}(\xi) \rvert^2 \dif \xi \geq C R^{-2\beta}
\end{equation}
for all $R \geq R_0$.
A function on $\Omega$ is $\beta$-self-similar if it has a $\beta$-self-similar extension.
\end{definition}

\rev{
As discussed in the introduction, self-similarity assumptions are routinely used in the literature on non-parametric statistics to exclude ``inconvenient'' or ``deceptive'' functions whose smoothness cannot be estimated~\citep{Bull2012, Szabo2015, NicklSzabo2016}.
See in particular Section~3 in \citet{Szabo2015}.
Functions such that
\begin{equation*}
  C_1 \norm[0]{ \xi }^{-(\beta + d/2)} \leq \abs[0]{\widehat{f}(\xi)} \leq C_2 \norm[0]{ \xi }^{-(\beta + d/2)}
\end{equation*}
for some positive $C_1$ and $C_2$ and all $\xi$ outside some ball centered at the origin are prototypical examples of $\beta$-self-similar functions because
\begin{equation*}
  \sup_{ \xi \in \R^d} \norm[0]{\xi}^{2\beta + d} \lvert \widehat{f}(\xi) \rvert^2 \leq C_2^2
\end{equation*}
and
\begin{equation} \label{eq:ss-proto-integral-lower}
  \int_{\norm[0]{\xi} \geq R} \lvert \widehat{f}(\xi) \rvert^2 \dif \xi \geq C_1^2 \int_{\norm[0]{\xi} \geq R} \norm[0]{ \xi }^{-2\beta - d} \dif \xi = C_1^2 C_d R^{-2\beta}
\end{equation}
for a certain positive constant $C_d$.}

\begin{lemma} \label{lemma:self-similar-inclusion}
  If $f \in L^2(\R^d)$ is $\beta$-self-similar, then $f \in H^\alpha(\R^d)$ if $\alpha < \beta$ and $f \notin H^\alpha(\R^d)$ if $\alpha > \beta$.
\end{lemma}
\begin{proof}
By~\eqref{eq:self-similar-upper}, there is a non-negative constant~$c$ such that
\begin{equation*}
  \norm[0]{f}_{H^\alpha(\R^d)}^2 = \int_{\R^d} ( 1 + \norm[0]{\xi}^2)^\alpha \lvert \widehat{f}(\xi) \rvert^2 \dif \xi \leq c \int_{\R^d} ( 1 + \norm[0]{\xi}^2)^\alpha \norm[0]{\xi}^{-2\beta - d}  \dif \xi,
\end{equation*}
which is finite if $\alpha < \beta$.
Therefore a $\beta$-self-similar function is in $H^\alpha(\R^d)$ for every $\alpha < \beta$.
On the other hand, from~\eqref{eq:self-similar-lower} we get
\begin{equation*}
  \begin{split}
  \norm[0]{f}_{H^\alpha(\R^d)}^2 = \int_{\R^d} ( 1 + \norm[0]{\xi}^2)^\alpha \lvert \widehat{f}(\xi) \rvert^2 \dif \xi &\geq \int_{\norm[0]{\xi} \geq R} ( 1 + \norm[0]{\xi}^2)^\alpha \lvert \widehat{f}(\xi) \rvert ^2 \dif \xi \\
  &\geq 2^\alpha R^{2\alpha} \int_{\norm[0]{\xi} \geq R} \lvert \widehat{f}(\xi) \rvert^2 \dif \xi \\
  &\geq C 2^\alpha R^{2(\alpha-\beta)}
  \end{split}
\end{equation*}
for every $R \geq \max\{1, R_0\}$.
Therefore $f \notin H^\alpha(\R^d)$ if $\alpha > \beta$.
\end{proof}

We shall work with self-similar functions that are supported in a given open set.
Let $\Omega \subset \R^d$ be an open set and define
\begin{equation*}
  H_\textup{ss}^\beta(\Omega) = \Set{ f \in L^2(\R^d) }{ \text{$f$ is $\beta$-self-similar and the support of $f$ is contained in $\Omega$}}.
\end{equation*}
It seems likely that the requirement that the functions be supported in $\Omega$ is not necessary in our results; see the discussion in \Cref{sec:main-matern}.
The following basic construction shows that $H_\textup{ss}^\beta(\Omega)$ is non-empty.

\begin{lemma} \label{lemma:ft-lower-bound}
Let $\Omega \subset \R^d$ be an open set.
For every $\beta > d/2$ there is $f \in L^2(\R^d)$ such that the support of $f$ is contained in $\Omega$ and
\begin{equation} \label{eq:ft-lower-bound-f0}
  C_1 (1 + \norm[0]{\xi}^2)^{-(\beta + d/2)} \leq \lvert \widehat{f}(\xi) \rvert^2 \leq C_2 (1 + \norm[0]{\xi}^2)^{-(\beta + d/2)}
\end{equation}
for some positive $C_1$ and $C_2$ and all $\xi \in \R^d$.
This function is in $H_\textup{ss}^{\beta}(\Omega)$ and an element of $H^{\alpha}(\R^d)$ if and only if $\alpha < \beta$.
\end{lemma}
\begin{proof}
We may assume without loss of generality that $\Omega$ is any open ball.
By first setting $\nu = \beta/2 - d/4 > 0$ and then selecting $\sigma$ and $\lambda$ properly in~\eqref{eq:Phi-matern} and~\eqref{eq:matern-fourier}, we obtain a function~$g$ with the Fourier transform
\begin{equation*}
  \widehat{g}(\xi) = (1 + \norm[0]{\xi}^2)^{-(\beta + d/2)/2}.
\end{equation*}
There exists a non-negative \emph{bump function} $\phi \colon \R^d \to \R$ that (i) is supported on the unit ball, (ii) satisfies $\sup_{x \in \R^d} \phi(x) = \phi(0) = 1$ and (iii) is infinitely differentiable and hence has a Fourier transform which decays faster than any polynomial.
The standard example of such a bump function is given by
\begin{equation} \label{eq:bump-function}
  \phi(x) = \exp\bigg(\! -\frac{1}{1 - \norm[0]{x}^2}\bigg) \: \text{ if } \: \norm[0]{x} < 1 \quad \text{ and } \quad \phi(x) = 0 \: \text{ if } \: \norm[0]{x} \geq 1.
\end{equation}
Because $\phi$ is radial, its Fourier transform is real-valued and therefore the function $\phi_2 = \phi * \phi$ has non-negative Fourier transform by the convolution theorem.
Because $\phi$ is supported on the unit ball, so are $\phi_2$ and $f \coloneqq \phi_2 g$.
It remains to show that $f$ satisfies other requirements in the lemma.

First, because the Fourier transform of $\phi$ decays faster than any polynomial, there is $C_\phi > 0$ such that 
\begin{equation*}
  \widehat{\phi}_2(\xi) = \widehat{\phi}(\xi)^2 \leq C_\phi (1 + \norm[0]{\xi}^2)^{-(\beta + d/2)/2}
\end{equation*}
for all $\xi \in \R^d$.
From $\norm[0]{\xi} \leq \norm[0]{\xi - \omega} + \norm[0]{\omega}$ it follows that
\begin{equation*}
\begin{split}
  \abs[0]{\widehat{f}(\xi)} ={}& \int_{\R^d} \widehat{\phi}_2(\omega) \widehat{g}(\xi - \omega) \dif \omega \\
  \leq{}& C_\phi \int_{\R^d} (1 + \norm[0]{\omega}^2)^{-(\beta + d/2)/2} (1 + \norm[0]{\xi - \omega}^2)^{-(\beta + d/2)/2} \dif \omega \\
  \leq{}& C_\phi \int_{\norm[0]{\xi - \omega} \leq \frac{1}{2} \norm[0]{\xi}} \bigg( 1 + \frac{\norm[0]{\xi}^2}{4} \bigg)^{-(\beta + d/2 )/2} (1 + \norm[0]{\xi - \omega}^2)^{-(\beta + d/2)/2} \dif \omega \\
  &+ C_\phi \int_{\norm[0]{\xi - \omega} \geq \frac{1}{2} \norm[0]{\xi}} (1 + \norm[0]{\omega}^2)^{-(\beta + d/2)/2} \bigg( 1 + \frac{\norm[0]{\xi}^2}{4} \bigg)^{-(\beta + d/2)/2} \dif \omega \\
  \leq{}& 2^{\beta+d/2} C_\phi \bigg[ \int_{\R^d} (1 + \norm[0]{\omega}^2)^{-(\beta + d/2)/2} \dif \omega \bigg] ( 1 + \norm[0]{\xi}^2 )^{-(\beta + d/2)/2},
\end{split}
\end{equation*}
which establishes the upper bound in~\eqref{eq:ft-lower-bound-f0}.

Because $\smash{\widehat{\phi}_2}$ is continuous and $\smash{\widehat{\phi}_2(0)} = \smash{\int_{\R^d} \phi_2(x) \dif x > 0}$, there is a positive constant $\delta$ such that $c_\phi \coloneqq \smash{\min_{\norm[0]{\omega} \leq \delta} \widehat{\phi}_2(\omega) > 0}$.
Therefore
\begin{equation*}
  \begin{split}
    \abs[0]{\widehat{f}(\xi)} = \int_{\R^d} \widehat{\phi}_2(\xi - \omega) \widehat{g}(\omega) \dif \omega \geq{}& \int_{\norm[0]{\xi - \omega} \leq \delta} \widehat{\phi}_2(\xi - \omega) (1 + \norm[0]{\omega}^2)^{-(\beta + d/2)/2} \dif \omega \\
    \geq{}& c_\phi \int_{\norm[0]{\xi - \omega} \leq \delta} (1 + \norm[0]{\omega}^2)^{-(\beta + d/2)/2} \dif \omega.
  \end{split}
\end{equation*}
Let $C_{d,\delta}$ be the volume of a $d$-dimensional $\delta$-ball.
For $\norm[0]{\xi} \geq \delta$ we get
\begin{equation*}
  \begin{split}
    \abs[0]{\widehat{f}(\xi)} &\geq c_\phi \int_{\norm[0]{\xi - \omega} \leq \delta} (1 + \norm[0]{\omega}^2)^{-(\beta + d/2)/2} \dif \omega \\
    &\geq c_\phi \int_{\norm[0]{\xi - \omega} \leq \delta} (1 + (\norm[0]{\xi} + \delta)^2)^{-(\beta + d/2)/2} \dif \omega \\
    &\geq C_{d,\delta} \, c_\phi (1 + 4\norm[0]{\xi}^2)^{-(\beta + d/2)/2} \\
    &\geq 2^{-(\beta+d/2)} C_{d, \delta} \, c_\phi (1 + \norm[0]{\xi}^2)^{-(\beta + d/2)/2}.
  \end{split}
\end{equation*}
The lower bound in~\eqref{eq:ft-lower-bound-f0} follows from this estimate and $\inf_{\norm[0]{\xi} \leq \delta} \abs[0]{\widehat{f}(\xi)} > 0$.

It is clear that $f$ satisfies~\eqref{eq:self-similar-upper}, while~\eqref{eq:self-similar-lower} follows from a computation similar to~\eqref{eq:ss-proto-integral-lower}.
Finally, that $f \in H^{\alpha}(\R^d)$ if and only if $\alpha < \beta$ is a consequence of~\eqref{eq:ft-lower-bound-f0} and the fact that 
\begin{equation*}
  \int_{\R^d} (1 + \norm[0]{\xi}^2)^{\alpha} ( 1 + \norm[0]{\xi}^2)^{-(\beta + d/2)} \dif \xi < \infty
\end{equation*}
if and only if $\alpha < \beta$.
\end{proof}

\subsection{Smoothness Estimation}

Let $\Omega \subset \R^d$ be a bounded set.
The \emph{fill-distance} $h_{n,\Omega}$ of the points $X_n = \{x_i\}_{i=1}^n$ is
\begin{equation*}
  h_{n, \Omega} = \sup_{x \in \Omega} \mathrm{dist}(x, X_n) = \sup_{x \in \Omega} \min_{i=1,\ldots,n} \norm[0]{x - x_i}
\end{equation*}
and their \emph{separation radius} $q_n$ is
\begin{equation*}
  q_n = \frac{1}{2} \min_{i \neq j} \norm[0]{x_i - x_j}.
\end{equation*}
The fill-distance equals the radius of the largest ball in $\Omega$ which does not contain any of the points in $X_n$, while an open ball with radius $q_n$ can contain at most one point in $X_n$.

\begin{definition}[Quasi-uniformity] \label{def:quasi-uniform}
  Suppose that $\Omega \subset \R^d$ is bounded.
  A point sequence $\{x_i\}_{i=1}^\infty \subset \Omega$ is \emph{quasi-uniform} on $\Omega$ if there is a positive constant $c_\textup{qu}$ such that
  \begin{equation} \label{eq:quasi-uniform}
    q_n \leq h_{n, \Omega} \leq c_\textup{qu} q_n \quad \text{ for every } \quad n \in \N.
  \end{equation}
\end{definition}

Quasi-uniform points cover the domain $\Omega$ in a sufficiently uniform manner, in that the ratio between the distance of the two nearest points in $X_n$ and the diameter of the largest empty region in $\Omega$ which does not contain any of the points in $X_n$ remains bounded from above and below.
Quasi-uniformity implies that~\citep[e.g.,][Proposition~14.1]{Wendland2005}
\begin{equation} \label{eq:quasi-uniform-rate}
  q_n \asymp_n h_{n,\Omega} \asymp_n n^{-1/d}.
\end{equation}
Although the quasi-uniformity assumption is not satisfied by random points, extensions of our results for random points may be possible by following \citet{KriegSonnleitner2022} and considering $L^p(\Omega)$-norms of the distance function $\mathrm{dist}(\cdot, X_n)$ for $p < \infty$.
The following assumption on regularity of the domain $\Omega$ is needed in some of our results.

\begin{assumption} \label{ass:domain}
  The domain $\Omega \subset \R^d$ is a bounded connected open set which satisfies an interior cone condition and has a Lipschitz boundary.
\end{assumption}

See Definition~3.6 in \citet{Wendland2005} for the interior cone condition and p.\@~189 in \citet{Stein1970} for the definition of a Lipschitz boundary.
The former of the assumptions prohibits the existence of pinch points on the boundary of $\Omega$ by requiring that each $x \in \Omega$ be a vertex of a cone contained in $\Omega$ while the latter prescribes that the boundary of $\Omega$ is sufficiently regular.
Standard domains, such as $(0, 1)^d$ and open bounded convex sets, satisfy \Cref{ass:domain}.

\subsubsection{Variance Bounds}

\Cref{prop:matern-upper,prop:matern-lower} provide upper and lower bounds on the conditional variance for Matérn kernels.
Although both propositions are well known in the literature~\citep[e.g.,][]{Novak1988, NovakTriebel2006}, we include a full (and fairly thorough) proof of the latter proposition because we need to keep track of the constants that appear in the bounds.

\begin{proposition} \label{prop:matern-upper}
  Suppose that $\Omega \subset \R^d$ satisfies \Cref{ass:domain} and that the points $\{x_i\}_{i=1}^\infty$ are such that $h_{n, \Omega} \lesssim_n n^{-1/d}$.
  Then
  \begin{equation*}
    \sup_{x \in \Omega} \mathbb{V}_\nu(x \mid X_n) \lesssim_n n^{-2\nu / d}
  \end{equation*}
  for every $\nu > 0$.
\end{proposition}
\begin{proof}
  This result is a direct consequence of Corollary~4.1 in \citet{Arcangeli2007} and the norm-equivalence of $H(K_\nu)$ and $H^{\nu + d/2}(\R^d)$.
  A slightly less general version would follow from Proposition~3.6 in \citet{WendlandRieger2005}.
\end{proof}

\begin{proposition} \label{prop:matern-lower}
  Suppose that $\Omega \subset \R^d$ is bounded, that $c(\nu)$ satisfies~\eqref{eq:matern-factor-assumption-lower}, and that the points $\{x_i\}_{i=1}^\infty$ are quasi-uniform.
  Then
  \begin{equation*}
    \inf_{\nu \in [\nu_\textup{min}, \tau]} \min_{1 \leq i \leq n} \mathbb{V}_\nu( x_i \mid X_n^i) \gtrsim_n n^{-2\tau  / d}
  \end{equation*}
  for every $\tau \in [\nu_\textup{min}, \nu_\textup{max}]$.
\end{proposition}
\begin{proof}
  Let $\tau \in [\nu_\textup{min}, \nu_\textup{max}]$ and $\nu \in [\nu_\textup{min}, \tau]$.
  Recall from~\eqref{eq:var-is-wce} that
\begin{equation*}
  \mathbb{V}_\nu(x_i \mid X_n^i) = \sup_{ \norm[0]{f}_\nu \leq 1} \abs[0]{ f(x) - \mu_{\nu, f}(x_i \mid X_n^i)}^2.
\end{equation*}
From this expression it follows that $\mathbb{V}_\nu( x_i \mid X_n^i) \geq \abs[0]{f(x_i)}^2$ if $f \in H(K_\nu) = H^{\nu + d/2}(\R^d)$ is a function such that $\norm[0]{f}_\nu \leq 1$ and $f(x) = 0$ for every $x \in X_n^i$ since, as can be seen from~\eqref{eq:gp-mean}, in this case $\mu_{\nu,f}(\cdot \mid X_n^i) \equiv 0$.
We now construct such a function.

  Let $\phi$ be the bump function in~\eqref{eq:bump-function}.
  For any $q > 0$, define the function $g \colon \R^d \to \R$ via
  \begin{equation} \label{eq:gx-fun-def}
    g(x) = \phi\bigg( \frac{x - x_i}{q} \bigg).
  \end{equation}
  This function is an element of $H^{\nu + d/2}(\R^d)$ for every $\nu > 0$ because the Fourier transform of~$\phi$ decays faster than any polynomial, which implies that the Sobolev norm in~\eqref{eq:sobolev-norm} is finite.
  Suppose that $q \leq 1$.
  Equation~\eqref{eq:matern-norm} and the scaling properties of the Fourier transform give
  \begin{equation*}
    \begin{split}
      \norm[0]{g}_\nu^2 &= C_\nu \int_{\R^d} \bigg( \frac{2\nu}{\lambda^2} + \norm[0]{\xi}^2 \bigg)^{\nu + d/2} \lvert \widehat{g}(\xi) \rvert^2 \dif \xi \\
      &\leq C_\nu q^{2d} \int_{\R^d} \bigg( \frac{2\nu}{\lambda^2} + \norm[0]{\xi}^2 \bigg)^{\nu + d/2} \lvert \widehat{\phi}(q\xi) \rvert^2 \dif \xi.
      \end{split}
  \end{equation*}
  A change of variables and some basic estimates based on $q \leq 1$ and $\nu \leq \tau$ then yield
  \begin{equation*}
    \begin{split}
      \norm[0]{g}_\nu^2 &\leq C_\nu q^d \int_{\R^d} \bigg( \frac{2\nu}{\lambda^2} + \frac{\norm[0]{\xi}^2}{q^2} \bigg)^{\nu + d/2} \lvert \widehat{\phi}(\xi) \rvert^2 \dif \xi \\
      &= C_\nu \lambda^{-(2\nu + d)} q^{-2\nu} \int_{\R^d} ( 2\nu q^2 + \lambda^2 \norm[0]{\xi}^2 )^{\nu + d/2} \lvert \widehat{\phi}(\xi) \rvert^2 \dif \xi \\
      &\leq C_\nu \lambda^{-(2\nu + d)} q^{-2\nu} \int_{\R^d} ( 2\nu + \lambda^2 \norm[0]{\xi}^2 )^{\nu + d/2} \lvert \widehat{\phi}(\xi) \rvert^2 \dif \xi \\
      &\leq C_\nu B_\lambda q^{-2\nu} \int_{\R^d} ( 2\tau + \lambda^2 \norm[0]{\xi}^2 )^{\tau + d/2} \lvert \widehat{\phi}(\xi) \rvert^2 \dif \xi,
      \end{split}
  \end{equation*}
  where \smash{$B_\lambda = \max\{1, \lambda^{-(2\tau + d)}\}$}.
  By assumption~\eqref{eq:matern-factor-assumption-lower}, there is $c > 0$ such that \smash{$\Gamma(\nu + d/2) \geq c$} and $c(\nu) \geq c$ for all \smash{$\nu \in [\nu_\textup{min}, \tau]$}.
  Moreover, \smash{$\max_{ \nu > 0} (2\nu)^{-\nu} = e^{1/(2e)}$}.
  Thus
  \begin{equation*}
    \begin{split}
      C_\nu = \frac{\pi^{d/2}}{\sigma^2 c(\nu) 2^{\nu - 1} \Gamma(\nu + d/2)} \bigg( \frac{\lambda^2}{2\nu} \bigg)^\nu &\leq \frac{2 \pi^{d/2}}{\sigma^2 c^2 } \max\{1, \lambda^{2\tau}\} (2\nu)^{-\nu} \\
      &\leq \frac{\pi^{d/2}}{\sigma^2 c^2 } \max\{1, \lambda^{2\tau}\} \, e^{1/(2e)}
    \end{split}
  \end{equation*}
  for all $\nu \in [\nu_\textup{min}, \tau]$.
    It follows that there is a constant $C > 0$ such that $\norm[0]{g}_\nu \leq C q^{-\nu}$ for every $\nu \in [\nu_\textup{min}, \tau]$.
      Therefore the $H(K_\nu)$-norm of the function $f = C^{-1} q^\nu g$ does not exceed one.
      
      Set $q = q_{n}$ and assume that $n$ is sufficiently large that $q_n \leq 1$ holds.
      It follows from $\phi$ being supported on the unit ball and~\eqref{eq:gx-fun-def}, as well as the definition of the separation radius, that $f(x) = 0$ for every $x \in X_n^i$.
      By the argument given in the beginning of the proof and $q_n \leq 1$,
      \begin{equation*}
        \mathbb{V}_\nu(x_i \mid X_n^i) \geq \lvert f(x_i) \rvert^2 = C^{-2} q_n^{2\nu} \, g(x_i) = C^{-2} q_n^{2\nu} \phi(0) = C^{-2} q_n^{2\nu} \geq C^{-2} q_n^{2\tau}
      \end{equation*}
      for every $\nu \in [\nu_\textup{min}, \tau]$.
      The claim then follows from the assumption that $\{x_i\}_{i=1}^\infty$ are quasi-uniform and~\eqref{eq:quasi-uniform-rate}.
\end{proof}

\subsubsection{Some Norm Bounds}

We need upper and lower bounds on the RKHS norm of the conditional mean.
These are given in \Cref{prop:rkhs-norm-bound,prop:rkhs-norm-bound-lower}, respectively.

\begin{proposition} \label{prop:rkhs-norm-bound}
  Suppose that $\Omega \subset \R^d$ is bounded and that the points $\{x_i\}_{i=1}^\infty$ are quasi-uniform on $\Omega$.
  Let $B$ be a bounded subset of $H^{\tau + d/2}(\R^d)$ for $\tau > 0$.
  Then
  \begin{equation*}
    \sup_{f \in B} \norm[0]{ \mu_{\nu, f}(\cdot \mid X_n) }_{\nu}^2 \lesssim_n n^{2(\nu - \tau)/d}
  \end{equation*}
  for every $\nu \geq \tau$.
\end{proposition}
\begin{proof}
The claim follows from Lemma~A.1 in \citet{Karvonen2020}, quasi-uniformity, and the norm-equivalence of $H(K_{\nu})$ and $H^{\nu + d/2}(\R^d)$.
\end{proof}

The following lemma states that the Matérn norm for $\tau > 0$ is weaker, in a uniform sense, than that for all $\nu \geq \tau$ if the standard scaling $c(\nu) = 2^{1-\nu} / \Gamma(\nu)$ is used.
\rev{It seems likely that this lemma exists in some form in the literature.
Note that for Sobolev norms we obtain from~\eqref{eq:sobolev-norm} the simpler result that $\norm[0]{f}_{H^\beta(\R^d)} \leq \norm[0]{f}_{H^\alpha(\R^d)}$ if $\alpha \geq \beta$.}

\begin{lemma} \label{lemma:matern-norm-comparison}
\rev{Let $\tau > 0$ and suppose that there is a positive constant $C_\tau$ such that 
\begin{equation} \label{eq:matern-factor-assumption-norm-comparison}
  c(\nu) \leq \frac{C_\tau}{ 2^\nu \Gamma(\nu) }
\end{equation}
for all $\nu \geq \tau$.
Then there is a positive constant~$C$ such that
\begin{equation} \label{eq:matern-norm-comparison}
  \norm[0]{f}_{\tau}^2 \leq C \norm[0]{f}_{\nu}^2
\end{equation}
for all $\nu \geq \tau$ and $f \in H(K_\tau)$, where we set $\norm[0]{f}_{\nu} = \infty$ if $f \notin H(K_\nu)$.
In particular, the estimate~\eqref{eq:matern-norm-comparison} holds for all $\nu \in [\tau, \nu_\textup{max}]$ if $\tau \in [\nu_\textup{min}, \nu_\textup{max}]$ and $c(\nu)$ satisfies~\eqref{eq:matern-factor-assumption-upper}.}
\end{lemma}
\begin{proof}
It is straightforward to compute that the function
\begin{equation*}
  p_{\tau,\nu}(x) = \frac{(2\tau/\lambda^2 + x)^{\tau + d/2}}{(2\nu/\lambda^2 + x)^{\nu + d/2}}
\end{equation*}
attains its maximum on $[0, \infty)$ at $x = d / \lambda^2$.
Therefore
\begin{equation} \label{eq:p-func-upper-bound}
  \begin{split}
    \max_{x \geq 0} p_{\tau,\nu}(x) = \frac{(2\tau/\lambda^2 + d/\lambda^2)^{\tau + d/2}}{(2\nu/\lambda^2 + d/\lambda^2)^{\nu + d/2}} &= \bigg( \frac{\lambda^2}{2} \bigg)^{\nu-\tau}  \frac{\tau^{\tau + d/2}}{\nu^{\nu + d/2}} \cdot \frac{(1 + d/(2\tau))^{\tau + d/2}}{(1 + d/(2\nu))^{\nu + d/2}} \\
    &\leq \bigg( \frac{\lambda^2}{2} \bigg)^{\nu-\tau} \frac{\tau^{\tau + d/2}}{\nu^{\nu + d/2}} (1 + d/(2\tau))^{\tau + d/2}.
  \end{split}
\end{equation}
Let $\kappa_\tau = c(\tau) 2^{\tau-1} \Gamma(\tau + d/2)$ and $\lambda_\tau = (1 + d/(2\tau))^{\tau + d/2}$.
From~\eqref{eq:p-func-upper-bound} we get
\begin{equation*}
  \begin{split}
    \norm[0]{ f }_{\tau}^2 ={}& \frac{\pi^{d/2}}{\sigma^2 \kappa_\tau} \bigg( \frac{\lambda^2}{2\tau} \bigg)^{\tau} \int_{\R^d} \bigg( \frac{2\tau}{\lambda^2} + \norm[0]{\xi}^2 \bigg)^{\tau + d/2} \lvert \widehat{f}(\xi) \rvert^2 \dif \xi \\
    ={}& \frac{\pi^{d/2}}{\sigma^2 \kappa_\tau} \bigg( \frac{\lambda^2}{2\tau} \bigg)^{\tau} \int_{\R^d} p_{\tau,\nu}(\norm[0]{\xi}^2) \bigg( \frac{2\nu}{\lambda^2} + \norm[0]{\xi}^2 \bigg)^{\nu + d/2} \lvert \widehat{f}(\xi) \rvert^2 \dif \xi \\ 
    \leq{}& \frac{\pi^{d/2}}{\sigma^2 \kappa_\tau} \bigg( \frac{\lambda^2}{2\tau} \bigg)^{\tau} \bigg( \frac{\lambda^2}{2} \bigg)^{\nu-\tau} \frac{\tau^{\tau + d/2}}{\nu^{\nu + d/2}} \lambda_\tau \int_{\R^d} \bigg( \frac{2\nu}{\lambda^2} + \norm[0]{\xi}^2 \bigg)^{\nu + d/2} \lvert \widehat{f}(\xi) \rvert^2 \dif \xi \\
    ={}& \frac{\lambda_\tau \tau^{d/2}}{\kappa_\tau} \cdot \frac{\pi^{d/2}}{\sigma^2 \nu^{d/2}} \bigg( \frac{\lambda^2}{2\nu} \bigg)^{\nu}  \int_{\R^d} \bigg( \frac{2\nu}{\lambda^2} + \norm[0]{\xi}^2 \bigg)^{\nu + d/2} \lvert \widehat{f}(\xi) \rvert^2 \dif \xi \\
    ={}& \frac{\lambda_\tau \tau^{d/2}}{\kappa_\tau} \cdot \frac{c(\nu) 2^{\nu-1} \Gamma(\nu + d/2)}{\nu^{d/2}} \norm[0]{f}_\nu^2.
  \end{split}
\end{equation*}
The claim then follows from~\eqref{eq:matern-factor-assumption-norm-comparison} and the Gamma function asymptotics $\Gamma(\nu + d/2) \sim \nu^{d/2} \Gamma(\nu)$ as $\nu \to \infty$.
\end{proof}

For self-similar functions whose support is contained in the domain the RKHS norm of the conditional mean can be bounded from below by the following proposition, which originally appeared as Theorem~8 in \citet{VaartZanten2011}.

\begin{proposition} \label{prop:rkhs-norm-bound-lower}
Suppose that $\Omega \subset \R^d$ satisfies \Cref{ass:domain}, that $c(\nu)$ satisfies~\eqref{eq:matern-factor-assumption-upper}, and that the points $\{x_i\}_{i=1}^\infty$ are quasi-uniform on $\Omega$.
Let \smash{$\tau \in (0, \nu_\textup{max}]$}.
Then for every $\smash{f \in H_\textup{ss}^{\tau + d/2}(\Omega)}$ we have
\begin{equation} \label{eq:rkhs-norm-lower-bound-new}
  \inf_{\nu \in [\nu_1, \nu_2]} \norm[0]{\mu_{\nu, f}(\cdot \mid X_n) }_\nu^2 \gtrsim_n n^{2(\nu_1 - \tau)/d - 4(\nu_1 - \tau) \delta}
\end{equation}
for any $\delta \in (0, \tau)$, $\nu_1 \in [\tau, \nu_\textup{max}]$, and $\nu_2 \in [\nu_1, \nu_\textup{max}]$.
\end{proposition}
\begin{proof}
  By the minimum-norm property~\eqref{eq:mean-is-min-norm} and \Cref{lemma:matern-norm-comparison}, there is a positive constant $C$ such that
  \begin{equation*}
    \begin{split}
      \norm[0]{\mu_{\nu_1, f}(\cdot \mid X_n) }_{\nu_1}^2 \leq \norm[0]{\mu_{\nu, f}(\cdot \mid X_n) }_{\nu_1}^2 &\leq C \norm[0]{\mu_{\nu, f}(\cdot \mid X_n) }_{\nu}
      \end{split}
  \end{equation*}
  for all $\nu \in [\tau, \nu_\textup{max}]$.
  It therefore suffices to prove~\eqref{eq:rkhs-norm-lower-bound-new} for the fixed smoothness $\nu = \nu_1$ (i.e., when $\nu_2 = \nu_1$).
  Proposition~4.7 in \citet{Karvonen2020} with $\alpha = \nu_1 + d/2$, $\beta = \tau + d/2 \leq \alpha$ and $\gamma = \tau - \delta + d/2 < \beta$ states that 
  \begin{equation} \label{eq:karvonen-norm-lower-bound}
    \norm[0]{\mu_{\nu_1, f}(\cdot \mid X_n) }_{\nu_1}^2 \gtrsim_n n^{2\gamma(\alpha/\beta -1)/d} = n^{2(\nu_1 - \tau)/d - 2(\nu_1 - \tau)\delta/(d(\tau+d/2)) }
  \end{equation}
  if $f \in H^\gamma(\R^d)$ is such that $\lvert\widehat{f}(\xi)\rvert^2 \geq C (1 + \norm[0]{\xi}^2)^{-(\beta + d/2)}$ for some $C > 0$ and all $\xi \in \R^d$ and the support of $f$ is contained in $\Omega$.
  This assumption on the decay of the Fourier transform implies~\eqref{eq:self-similar-lower}.
  However, by inspection of the proofs of Proposition~4.7 in \citet{Karvonen2020} [specifically, the equation after~(A.10)] and Theorem~8 in \citet{VaartZanten2011} we see that~\eqref{eq:self-similar-lower} is sufficient to establish~\eqref{eq:karvonen-norm-lower-bound}.
  Moreover, that $\floor{\gamma} > d/2$ may be relaxed to $\gamma > d/2$ by using Corollary~4.1 of \citet{Arcangeli2007} to derive Equation~(A.9) in \citet{Karvonen2020}.
  For any $\smash{f \in H_\textup{ss}^{\tau + d/2}(\Omega)}$ we therefore have
  \begin{equation*}
    \norm[0]{\mu_{\nu_1, f}(\cdot \mid X_n) }_{\nu_1}^2 \gtrsim_n = n^{2(\nu_1 - \tau)/d - 2(\nu_1 - \tau)\delta/(d(\tau+d/2)) } \geq n^{2(\nu_1 - \tau)/d - 4(\nu_1 - \tau) \delta},
  \end{equation*}
  which completes the proof.
\end{proof}

\subsubsection{Main Result for Matérns} \label{sec:main-matern}

\Cref{prop:matern-upper,prop:matern-lower,prop:rkhs-norm-bound,prop:rkhs-norm-bound-lower} yield our main result on smoothness estimation in the Matérn model.
We shall use a parameter space of the following form:
\begin{equation} \label{eq:Theta-matern}
  \Theta = [\nu_\textup{min}, \nu_\textup{max}] \quad \text{ for } \quad \nu_\textup{min} \in (0, \nu_\textup{max}] \: \text{ and } \: \nu_\textup{max} \in (d/2, \infty).
\end{equation}

\begin{theorem} \label{thm:matern-main}
  Let $\Theta$ be given in~\eqref{eq:Theta-matern}.
  Suppose that $c(\nu)$ satisfies \eqref{eq:matern-factor-assumption-lower}, that $\Omega \subset \R^d$ satisfies \Cref{ass:domain}, and that the points $\{x_i\}_{i=1}^\infty$ are quasi-uniform on $\Omega$.
  If $B$ is a bounded subset of $H^{\nu_0}(\Omega)$ for \smash{$\nu_0 \in (d/2, \nu_\textup{max}]$}, then
  \begin{equation} \label{eq:matern-main-lower}
    \liminf_{n \to \infty} \inf_{f_0 \in B} \hat{\nu}_\ML^{f_0} (X_n) \geq \nu_0 \quad \text{ and } \quad \liminf_{n \to \infty} \inf_{f_0 \in B} \hat{\nu}_\CV^{f_0}(X_n) \geq \nu_0 - d/2.
  \end{equation}
  Moreover, the bound for maximum likelihood estimation is sharp if $c(\nu)$ satisfies also \eqref{eq:matern-factor-assumption-upper}, in the sense that for every $\varepsilon > 0$ there is $f_0 \in H^{\nu_0}(\Omega)$ such that
  \begin{equation} \label{eq:matern-main-upper}
    \limsup_{n \to \infty} \hat{\nu}_\ML^{f_0}(X_n) \leq \nu_0 + \varepsilon.
  \end{equation}
\end{theorem}
\begin{proof}
By~\eqref{eq:sobolev-space-restricted}, each $f_0 \in B \subset H^{\nu_0}(\Omega)$ may be identified with $f_e \in H^{\nu_0}(\R^d)$ such that $f_e|_\Omega = f_0$ and the Sobolev norms of $f_0$ and $f_e$ are equal.
The set of these extensions $f_e$ is bounded in $H^{\nu_0}(\R^d)$. 
We may therefore proceed as if $B$ were a bounded subset of $H^{\nu_0}(\R^d)$.
Let $\tilde{\nu} > 0$ and $\nu_1 \in \Theta$.
\Cref{prop:matern-upper,prop:matern-lower} yield
\begin{equation} \label{eq:matern-proof-var-ratio}
  \sup_{ \nu \in [\nu_\textup{min}, \nu_1] } \max_{1 \leq i \leq n} \frac{ \mathbb{V}_{\tilde{\nu}}(x_i \mid X_n^i) }{ \mathbb{V}_{\nu}(x_i \mid X_n^i) } \lesssim_n \frac{n^{-2\tilde{\nu}/ d}}{n^{-2\nu_1/d}} = n^{-2(\tilde{\nu} - \nu_1)/d}.
\end{equation}
Because $H(K_{\nu_0 - d/2})$ is norm-equivalent to $H^{\nu_0}(\R^d)$, the bound on the lower limit for $\hat{\nu}_\CV^{f_0}(X_n)$ follows by setting $\theta_0 = \tilde{\nu} = \nu_0 - d/2$ and $\theta_1 = \nu_1 < \nu_0 - d/2 < \tilde{\nu}$ in \Cref{cor:general-interval}, in which case $n^{-2(\tilde{\nu} - \nu_1)/d} \to 0$.
Note that the lower limit holds trivially whenever $\nu_0 - d/2 < \nu_\textup{min}$.

To prove the lower bound for the maximum likelihood estimator we shall apply \Cref{cor:general-interval-new} with $\theta_0 = \nu_0$.
If $\nu_0 < \nu_\textup{min}$, the lower bound holds trivially, so we may assume that $\nu_0 \in \Theta$.
Setting $\tilde{\nu} = \nu_0$ and considering the case $i = n$ (so that $X_n^i = X_{n-1}$) in~\eqref{eq:matern-proof-var-ratio} yields
\begin{equation} \label{eq:matern-proof-var-ratio-2}
  \sup_{ \nu \in [\nu_\textup{min}, \nu_1] } \frac{ \mathbb{V}_{\nu_0}(x_n \mid X_{n-1}) }{ \mathbb{V}_{\nu}(x_n \mid X_{n-1}) } \lesssim_n n^{-2(\nu_0 - \nu_1)/d}.
\end{equation}
\Cref{prop:rkhs-norm-bound} with $\nu = \nu_0$ and $\tau = \nu_0 - d/2$ yields
\begin{equation*}
  \sup_{f_0 \in B}\norm[0]{ \mu_{\nu_0, f_0}(\cdot \mid X_n) }_{\nu_0}^2 \lesssim_n n^{2(\nu - \tau)/d} = n.
\end{equation*}
Therefore,
\begin{equation*}
  \begin{split}
    Q_n(\nu_1) &\coloneqq \sup_{ \nu  \in [\nu_\textup{min}, \nu_1] } \sup_{f_0 \in B} \Bigg[ \norm[0]{\mu_{\nu_0, f_0}(\cdot \mid X_n)}_{\nu_0}^2 + \sum_{i=1}^n \log \frac{\mathbb{V}_{\nu_0}(x_{i} \mid X_{i-1})}{\mathbb{V}_{\nu}(x_{i} \mid X_{i-1})} \Bigg] \\
      &\lesssim_n n - \frac{2(\nu_0 - \nu_1)}{d} \sum_{i=1}^n \log i \\
      &= n - \frac{2(\nu_0 - \nu_1)}{d} \log n!.
  \end{split}
\end{equation*}
From Stirling's formula $n! \sim \sqrt{2\pi} n^{n+1/2} e^{-n}$ as $n \to \infty$ we get
\begin{equation*}
  \frac{2(\nu_0 - \nu_1)}{d} \log n! \sim \frac{2(\nu_0 - \nu_1)}{d} n \log n.
\end{equation*}
Hence
\begin{equation*}
  Q_n(\nu_1) \lesssim_n n - \frac{2(\nu_0 - \nu_1)}{d} \log n! \to -\infty
\end{equation*}
as $n \to \infty$ for every $\nu_1$ such that $2(\nu_0 - \nu_1)/d > 0$, which is equivalent to $\nu_1 < \nu_0$.
We may thus use \Cref{cor:general-interval-new} with $\theta_0 = \nu_0$.

We are left to prove that the lower bound for the maximum likelihood estimator is sharp in the sense that for every $\varepsilon > 0$ there is $f_0 \in H^{\nu_0}(\Omega)$ for which~\eqref{eq:matern-main-upper} holds.
We may assume that $\nu_0 + \varepsilon < \nu_\textup{max}$, for otherwise~\eqref{eq:matern-main-upper} holds trivially.
We shall use \Cref{cor:upper-bounds-interval}.
By \Cref{lemma:self-similar-inclusion,lemma:ft-lower-bound} and \Cref{prop:rkhs-norm-bound-lower} with $\tau = \nu_0 + \varepsilon/2 - d/2$, $\nu_1 = \nu_0 + \varepsilon$, and $\delta$ sufficiently small, there exists $\smash{f_0 \in H^{\nu_0}(\R^d)}$ such that 
\begin{equation} \label{eq:norm-lower-bound-in-main-proof}
  \begin{split}
  \inf_{\nu \in [\nu_1, \nu_2]} \norm[0]{\mu_{\nu, f_0}(\cdot \mid X_n) }_\nu^2 \gtrsim_n n^{2(\nu_1 - \tau)/d - 4(\nu_1 - \tau) \delta} &= n^{1 + \varepsilon/d - 4(d/2 + \varepsilon/2)\delta} \\
  &\geq n^{1 + \varepsilon/(2d)}
  \end{split}
\end{equation}
for every $\nu_2 \in [\nu_1, \nu_\textup{max}]$.
The lower bound~\eqref{eq:norm-lower-bound-in-main-proof} and \Cref{prop:matern-lower} yield
\begin{equation} \label{eq:P-matern-proof}
  \begin{split}
    P_n(\nu_1, \nu_2) &\coloneqq \inf_{\nu \in [\nu_1, \nu_2]} \ell_\ML^{f_0}( \nu \mid X_n ) \\
    &= \inf_{\nu \in [\nu_1, \nu_2]} \Bigg[ \norm[0]{\mu_{\nu, f_0}(\cdot \mid X_n)}_{\nu}^2 + \sum_{i=1}^n \log \mathbb{V}_{\nu}(x_{i} \mid X_{i-1}) \Bigg] \\
    &\gtrsim_n n^{1 + \varepsilon/(2d)} - \frac{2\nu_2}{d} \log n!,
  \end{split}
\end{equation}
so that $P_n(\nu_1, \nu_2) \to \infty$ as $n \to \infty$ by the same arguments that we used to control $Q_n(\nu_1)$ above.
The claim follows by applying \Cref{cor:upper-bounds-interval} with $\theta_0 = \nu_1 = \nu_0 + \varepsilon$ and $B = \{f_0\}$ for every $\theta_1 = \nu_2 > \nu_1$.
\end{proof}

As discussed in \Cref{sec:parameter-estimation}, \Cref{cor:general-interval} that we applied to cross-validation is weaker than \Cref{cor:general-interval-new} that we applied to maximum likelihood estimation. \Cref{cor:general-interval} is capable of establishing a lower bound $\nu_0$ only if $B$ is a bounded subset of $H^{\nu_0 + d/2}(\Omega)$ while \Cref{cor:general-interval-new} is more flexible. 
The use of \Cref{cor:general-interval-new} to improve the lower bound for cross-validation would require that one proved a variant of \Cref{prop:rkhs-norm-bound} to bound
\begin{equation*}
  \sum_{i=1}^n \frac{(f_0(x_{i}) - \mu_{\nu, f_0}(x_i \mid X_n^i))^2}{\mathbb{V}_{\nu}( x_i \mid X_n^i )}
\end{equation*}
from above when $\nu \geq \nu_0$. 
We do not presently know how to do this.
Similarly, proving a version of the upper bound~\eqref{eq:matern-main-upper} for cross-validation would require controlling the above quantity from below.

The following theorem optimises \Cref{thm:matern-main} for the maximum likelihood estimator.
For a function $f$ that is defined and square-integrable at least on~$\Omega$, let
\begin{equation*} \label{eq:smoothness-f}
  \nu(f) = \sup \Set{\nu > 0}{f|_\Omega \in H^{\nu}(\Omega)},
\end{equation*}
so that $f \in H^{\nu(f) - \varepsilon}(\Omega)$ for every $\varepsilon \in (0, \nu(f))$.
\citet[Section~5.1.1]{WangJing2021} call $\nu(f)$ the \emph{smoothness of} $f$.

\begin{theorem} \label{thm:matern-main-2}
  Let $\Theta$ be given in~\eqref{eq:Theta-matern}.
  Suppose that $c(\nu)$ satisfies \eqref{eq:matern-factor-assumption-lower}, that $\Omega \subset \R^d$ satisfies \Cref{ass:domain}, and that the points $\{x_i\}_{i=1}^\infty$ are quasi-uniform on $\Omega$.
  \begin{enumerate}
  \item If $f_0 \colon \Omega \to \R$ is such that $\nu(f_0) \in (d/2, \nu_\textup{max}]$, then
  \begin{equation} \label{eq:matern-ml-lower-bound-2}
    \liminf_{n \to \infty} \hat{\nu}_\ML^{f_0} (X_n) \geq \nu(f_0).
  \end{equation}
  \item If $f_0 \in H_\textup{ss}^{\nu_0}(\Omega)$ for $\nu_0 \in (d/2, \nu_\textup{max}]$ and $c(\nu)$ satisfies also \eqref{eq:matern-factor-assumption-upper}, then
  \begin{equation} \label{eq:matern-ml-exact-limit}
    \lim_{n \to \infty} \hat{\nu}_\ML^{f_0}(X_n) = \nu_0.
  \end{equation}
  \end{enumerate}
\end{theorem}
\begin{proof}
  The bound~\eqref{eq:matern-ml-lower-bound-2} follows from~\eqref{eq:matern-main-lower} because $B = \smash{\{f_0\} \subset H^{\nu(f_0) - \varepsilon}(\Omega)}$ for every sufficiently small $\varepsilon > 0$.
  Suppose that $f_0 \in \smash{H_\textup{ss}^{\nu_0}(\Omega)}$ and $\nu_0 < \nu_\textup{max}$.
  Let $\varepsilon > 0$ be small enough that $\nu_0 + \varepsilon < \nu_\textup{max}$.
  With $\tau = \nu_0 - d/2$, $\nu_1 = \nu_0 + \varepsilon$, and $\delta$ sufficiently small, \Cref{prop:rkhs-norm-bound-lower} yields
\begin{equation*}
  \inf_{\nu \in [\nu_1, \nu_2]} \norm[0]{\mu_{\nu, f_0}(\cdot \mid X_n) }_\nu^2 \gtrsim_n n^{2(\nu_1 - \tau)/d - 4(\nu_1 - \tau) \delta} = n^{1 + 2\varepsilon/d - 4(d/2+\varepsilon)\delta} \geq n^{1 + \varepsilon/d}
\end{equation*}
for every $\nu_2 \in [\nu_1, \nu_\textup{max}]$.
By bounding $P_n(\nu_1, \nu_2) = \inf_{\nu \in [\nu_1, \nu_2]} \ell_\ML^{f_0}( \nu \mid X_n )$ as in the proof of \Cref{thm:matern-main} and applying \Cref{cor:upper-bounds-interval} with $\theta_0 = \nu_0$ and $B = \{f_0\}$ to the parameter space $\Theta = [\nu_\textup{min}, \nu_\textup{max}]$, we then get $\smash{\limsup_{n \to \infty} \hat{\nu}_\ML^{f_0}(X_n)} \leq \nu_0$.
If $\nu_0 = \nu_\textup{max}$, this upper bound holds trivially.
To establish a matching lower bound, observe that, by \Cref{lemma:self-similar-inclusion}, $\smash{f_0|_\Omega \in H^{\nu}(\Omega)}$ for every $\nu < \nu_0$.
Therefore $\nu(f_0) \geq \nu_0$, and it thus follows from~\eqref{eq:matern-ml-lower-bound-2} that \smash{$\liminf_{n \to \infty} \hat{\nu}_\ML^{f_0} (X_n) \geq \nu_0$}.
This concludes the proof of~\eqref{eq:matern-ml-exact-limit}.
\end{proof}

\rev{Let us say a few words about the assumptions in \Cref{thm:matern-main,thm:matern-main-2}.
As discussed earlier, \Cref{ass:domain} on the domain is non-restrictive.
In comparison to other spatial sampling assumptions employed in the literature, the quasi-uniformity assumption is rather non-restrictive and generic.
However, random points are not quasi-uniform.
Significant relaxations of the quasi-uniformity assumption may require a new assumption on the spatial homogeneity of $f_0$.
The assumption that the parameter space is compact and bounded away from zero is not practically restrictive.
Generalisations to the case $\Theta = (0, \infty)$ would require much more careful handling of smoothness-dependent constant coefficients.
For example, in the proof of~\eqref{eq:matern-main-upper} the two terms that make up $P_n(\nu_1, \nu_2)$ in~\eqref{eq:P-matern-proof} are bounded from below individually, which allows us to ignore potential dependencies on $\nu_2$ in their constant coefficients.

That the self-similar functions for which the limit~\eqref{eq:matern-ml-exact-limit} is obtained are required to have their supports in $\Omega$ is the most unsatisfactory part of \Cref{thm:matern-main,thm:matern-main-2}.
Because the samples from a Gaussian process with a Matérn covariance kernel are not compactly supported, this assumption is likely superfluous.
The assumption propagates from \Cref{prop:rkhs-norm-bound-lower} and Theorem~8 in \citet{VaartZanten2011} and to remove it a new technique to obtain lower bounds on RKHS norms is needed.}

\subsection{Infinitely Smooth Functions}

The following corollary shows that the parameter estimators detect infinite smoothness of the response function.
Here we obviously need to consider an infinite parameter space.

\begin{corollary} \label{cor:matern-infinitely-smooth}
  Let $\Theta = [\nu_\textup{min}, \infty)$ for $\nu_\textup{min} > 0$.
  Suppose that $c(\nu)$ satisfies~\eqref{eq:matern-factor-assumption-lower} for every $\nu_\textup{max} > \nu_\textup{min}$, that $\Omega \subset \R^d$ satisfies \Cref{ass:domain}, and that the points $\{x_i\}_{i=1}^\infty$ are quasi-uniform on $\Omega$.
  If $B$ is a bounded subset of $H^{\nu}(\Omega)$ for every $\nu > d/2$, then
  \begin{equation*}
    \lim_{n \to \infty} \inf_{f_0 \in B} \hat{\nu}_\ML^{f_0}(X_n) = \lim_{n \to \infty} \inf_{f_0 \in B} \hat{\nu}_\CV^{f_0}(X_n) = \infty.
  \end{equation*}
\end{corollary}
\begin{proof}
  The claims follow because the bounds~\eqref{eq:matern-main-lower} hold for every $\nu_0 > d/2$.
\end{proof}

Even though the Matérn kernel, with proper parametrisation, tends to the infinitely smooth Gaussian kernel as $\nu \to \infty$, it is not required in \Cref{cor:matern-infinitely-smooth} that $f_0$ be an element of the RKHS of the Gaussian kernel or of an RKHS that contains the samples of a Gaussian process with the Gaussian kernel.
However, as we now argue, it is likely that membership in the Gaussian RKHS causes the smoothness parameter estimates to diverge faster as $n$ increases than if this were not the case.
For simplicity, consider \Cref{thm:general} and maximum likelihood estimation.
From the proof of \Cref{thm:general} we see that $\hat{\nu}_\ML^{f_0}(X_n) \geq \nu_1$ if $f \in H(K_{\nu_0})$ and 
\begin{equation*}
  \norm[0]{f_0}_{\nu_0}^2 + \sup_{ \nu \in [\nu_\textup{min}, \nu_1]} \sum_{i=1}^{n} \log \frac{ \mathbb{V}_{\nu_0}(x_{i} \mid X_{i-1}) }{ \mathbb{V}_{\nu}(x_{i} \mid X_{i-1}) } < 0
\end{equation*}
for $\nu_1 < \nu_0$.
Notably, the second term above does not depend on the response function.
This suggests (though does not rigorously prove) that the larger $\norm[0]{f_0}_{\nu_0}^2$ is, the larger an $n$ is needed for the maximum likelihood estimator to exceed $\nu_1$.
Or, in other words, a fast rate of growth of $\norm[0]{f_0}_{\nu}$ as $\nu$ increases ought to imply a slow rate of growth of $\hat{\nu}_\ML^{f_0}(X_n)$.
Next we estimate $\norm[0]{f_0}_{\nu}$ for two functions, of which one is not an element of the RKHS of the Gaussian kernel and the other is.

Consider the Matérn kernel
\begin{equation*}
  K_\nu(x, y) = \frac{2^{1-\nu}}{\Gamma(\nu)} \big( \sqrt{\nu} \abs[0]{x-y} \big)^\nu \mathcal{K}_\nu \big( \sqrt{\nu} \abs[0]{x-y} \big)
\end{equation*}
for $d = 1$, where we have set $\sigma = 1$ and $\lambda = \sqrt{2}$.
As $\nu \to \infty$, the Matérn kernel above tends to the Gaussian kernel $K(x, y) = \exp(-(x-y)^2/4)$.
By the characterisation~\eqref{eq:rkhs-fourier-transform}, the RKHS of the Gaussian kernel consists of functions whose Fourier transforms satisfy
\begin{equation} \label{eq:gauss-rkhs-infinitely-smooth}
  \int_\R \exp(\xi^2) \lvert \widehat{f}(\xi) \rvert^2 \dif \xi < \infty.
\end{equation}
Consider the function $f_0(x) = 1/(1/4+x^2)$ with the Fourier transform $\widehat{f}_0(\xi) = \exp(-\abs[0]{\xi} / 2)$.
It is clear from~\eqref{eq:matern-norm} and~\eqref{eq:gauss-rkhs-infinitely-smooth} that this function is an element of every Matérn RKHS but not an element of the RKHS of the Gaussian kernel.
Now, from~\eqref{eq:matern-norm} we have
\begin{equation*}
  \begin{split}
    \norm[0]{f_0}_\nu^2 = C_\nu \int_\R ( \nu + \xi^2 )^{\nu + 1/2} \exp ( -\abs[0]{\xi} ) \dif \xi
    &= 2 C_\nu \int_0^\infty ( \nu + \xi^2 )^{\nu + 1/2} \exp ( - \xi ) \dif \xi \\
    &\geq 2 C_\nu \int_0^\infty \xi^{2\nu+1} \exp ( - \xi ) \dif \xi \\
    &= 2 \sqrt{\pi} \, \frac{ \Gamma(\nu) \Gamma(2\nu + 2)}{\Gamma(\nu + 1/2)} \nu^{-\nu}.
    \end{split}
\end{equation*}
From $\Gamma(\nu + 1/2) \sim \Gamma(\nu) \sqrt{\nu}$ and Stirling's formula we get
\begin{equation*}
  \frac{ \Gamma(\nu) \Gamma(2\nu + 2)}{\Gamma(\nu + 1/2)} \nu^{-\nu} \sim \Gamma(2\nu + 2) \nu^{-\nu-1/2} \sim \sqrt{2\pi} \, \frac{2^{\nu+1/2}}{e^{2\nu+1}} (2\nu + 1)^{\nu+1},
\end{equation*}
which shows that $\smash{\norm[0]{f_0}_\nu \to \infty}$ very fast as $\nu \to \infty$.
We therefore expect that for this response function $\smash{\hat{\nu}_\ML^{f_0}(X_n)}$ grows rather slowly.

As a second example, consider the function \smash{$f_0(x) = (2 \sqrt{\pi}\,)^{-1} \exp(-x^2/4)$} with the Fourier transform \smash{$\widehat{f}_0(\xi) = \exp(-\xi^2)$}.
Unlike the previous function, this function is an element of the Gaussian RKHS because
\begin{equation*}
  \int_\R \exp(\xi^2) \lvert \widehat{f}_0(\xi) \rvert^2 \dif \xi = \int_\R \exp(-\xi^2) \dif \xi = \sqrt{\pi} < \infty.
\end{equation*}
Furthermore,
\begin{equation*}
  \begin{split}
    \norm[0]{f_0}_\nu^2 = C_\nu \int_\R (\nu + \xi^2)^{\nu + 1/2} \exp(-2\xi^2) \dif \xi &= C_\nu \nu^{\nu+1/2} \int_\R \bigg(1 + \frac{\xi^2}{\nu} \bigg)^{\nu + 1/2} \exp(-2\xi^2) \dif \xi \\
    &\leq C_\nu \nu^{\nu + 1/2} \int_\R \exp(\xi^2) \sqrt{ 1 + \frac{\xi^2}{\nu} } \exp(-2\xi^2) \dif \xi \\
    &= \sqrt{\pi} \, \frac{ \Gamma(\nu) \sqrt{\nu} }{\Gamma(\nu + 1/2)} \int_\R \sqrt{ 1 + \frac{\xi^2}{\nu} } \exp(-\xi^2) \dif \xi \\
    &\sim \sqrt{\pi} \int_\R \exp(-\xi^2) \dif \xi \\
    &= \pi.
    \end{split}
\end{equation*}
Therefore $\norm[0]{f_0}_\nu$ is bounded as $\nu \to \infty$ and we can thus expect that $\hat{\nu}_\ML^{f_0}(X_n)$ grows fast as $n$ increases.

\subsection{Sample Paths} \label{sec:sample-paths}

Let us then assume that $f_0$ is a version of a zero-mean Gaussian process with a Matérn covariance kernel $K_{\nu_0}$ that has continuous sample paths (such a version always exists).
It is well known that under these assumptions almost all samples of $f_0$ are in the Sobolev space of order $\nu_0 - \varepsilon$ for every $\varepsilon > 0$.
Specifically, if $\Omega \subset \R^d$ satisfies \Cref{ass:domain}, then $f_0 \in H^{\nu_0 - \varepsilon}(\Omega)$ almost surely for every $\varepsilon > 0$.
  For results of this type, see \citet{Scheuerer2010}; Corollary~4.15 in \citet{Kanagawa2018}; Corollaries~4.5 and~5.7 in \citet{Steinwart2019}; \citet{Karvonen2021}; and \citet{Henderson2022}.
  From this result we obtain a Bayesian version of \Cref{thm:matern-main}.

\begin{corollary} \label{cor:matern-samples}
  Let $\Theta$ be given in~\eqref{eq:Theta-matern}.
  Suppose that $c(\nu)$ satisfies \eqref{eq:matern-factor-assumption-lower}, that $\Omega \subset \R^d$ satisfies \Cref{ass:domain}, and that the points $\{x_i\}_{i=1}^\infty$ are quasi-uniform on $\Omega$.  
    If $f_0$ is a version of a zero-mean Gaussian process with a Matérn covariance kernel $K_{\nu_0}$ for some \smash{$\nu_0 \in (d/2, \nu_\textup{max}]$} such that almost all of its samples are continuous, then
  \begin{equation*}
    \liminf_{ n \to \infty } \hat{\nu}_\ML^{f_0}(X_n) \geq \nu_0 \quad \text{ and } \quad \liminf_{ n \to \infty } \hat{\nu}_\CV^{f_0}(X_n) \geq \nu_0 - \frac{d}{2} \quad \text{almost surely}.
  \end{equation*}
\end{corollary}
\begin{proof}
  Because $f_0 \in H^{\nu_0 - 1/k}(\Omega)$ almost surely for every $k \in \N$, we obtain from \Cref{thm:matern-main} that
  \begin{equation*} 
    \liminf_{ n \to \infty } \hat{\nu}_\ML^{f_0}(X_n) \geq \nu_0 - \frac{1}{k} \quad \text{almost surely}
  \end{equation*}
  for every $k \in \N$.
  The claims follow by taking intersections over $k \in \N$ of sets of measure one.
  The proof for cross-validation is analogous.
\end{proof}

As discussed in \Cref{sec:main-matern}, the result in \Cref{cor:matern-samples} for cross-validation is likely sub-optimal.
Because the samples are not supported on $\Omega$, we are unable to exploit results that require $f_0$ to be an element of $\smash{H_\textup{ss}^{\nu_0}(\Omega)}$.
Nevertheless, \Cref{cor:matern-samples} is a step towards showing that the maximum likelihood and cross-validation estimators are consistent or strongly consistent.
\citet[Theorem~2.7]{Chen2021} and \citet{Petit2022} have proved consistency results for smoothness estimators of periodic Matérn-type kernels.

\section{Discussion} \label{sec:discussion}

This section contains some discussion on the implications of the smoothness estimation results in \Cref{sec:matern}.
We also discuss the use of \Cref{thm:general} to estimate the scale parameter of an infinitely smooth stationary kernel.

\subsection{Approximation and Uncertainty Quantification} \label{sec:uq}

Suppose that $f_0 \in H^{\nu_0 + d/2}(\Omega)$ and that the assumptions of \Cref{thm:matern-main} are satisfied.
We now discuss what happens in the frequentist setting when a Matérn model of smoothness $\nu$ either over- or undersmooths the truth.
For a succinct review on the closely related topic of coverage properties of Bayesian credible intervals we refer to \citet[pp.\@~1391--2]{Szabo2015}.

\vspace{0.1cm}

\noindent \emph{Undersmoothing}: $\nu_0 > \nu$.
If the response function is smoother than the Matérn prior, the current theory does not guarantee that the conditional mean tends to the response function with a rate that adapts to the smoothness, except if (essentially) $\nu_0 \geq 2\nu$, in which case one can expect the rate to be approximately $n^{-2\nu/d}$.
This is known as \emph{superconvergence}~\citep{Schaback2018} or the \emph{improved rate}~\citep[Section~11.5]{Wendland2005} of kernel-based approximation.
See \citet[Sections~3.4 and~4.5]{Karvonen2020} and \citet[Section~2.3]{TuoWangWu2020} for discussion on superconvergence in the context of Gaussian process interpolation.
By~\eqref{eq:rkhs-error} and the fact that $f_0 \in H^{\nu_0 + d/2}(\Omega) \subset H^{\nu + d/2}(\Omega)$, 
\begin{equation} \label{eq:discussion-undersmoothing}
  \abs[0]{ f_0(x) - \mu_{\nu, f_0}(x \mid X_n) } \leq \norm[0]{f_0}_\nu \mathbb{V}_\nu(x \mid X_n)^{1/2}
\end{equation}
for every $x \in \Omega$.
Therefore $f_0$ is contained in the credible set
\begin{equation} \label{eq:discussion-credible-set}
  \mathcal{C}^n_\rho(f_0) = \Set[\big]{f \colon \Omega \to \R}{\abs[0]{ f(x) - \mu_{\nu, f_0}(x \mid X_n) } \leq \rho \mathbb{V}_\nu(x \mid X_n)^{1/2} \text{ for every } x \in \Omega }
\end{equation}
for any $\rho \geq \norm[0]{f_0}_\nu$ and $n \in \N$.
Reliable uncertainty quantification is therefore possible when the model undersmooths, in that there is a fixed credible level for which the credible set centered at the conditional mean contains the truth for all $n$.
If $f_0$ is smooth enough to benefit from superconvergence (or other such phenomenon), then $f_0 \in \mathcal{C}_{\rho_n}^n(f_0)$ can hold even for a sequence $(\rho_n)_{n=1}^\infty$ that decays fast.
Since $\smash{\| \mu_{\nu, f_0}( \cdot \mid X_n) \|_\nu^2 \to \norm[0]{f_0}_{\nu}}$ as $n \to \infty$ if $h_{n, \Omega} \to 0$~\citep[e.g.,][Theorem~8.37]{Iske2018}, one may use~\eqref{eq:mu-norm-explicit} to construct a sequence $(\bar{\rho}_n)_{n=1}^\infty$ with limit $\bar{\rho} = \norm[0]{f_0}_\nu = \lim_{n \to \infty} \bar{\rho}_n$ such that $f_0 \in \mathcal{C}_{\bar{\rho}}^n(f_0)$ for every $n$.

\vspace{0.1cm}

\noindent \emph{Oversmoothing}: $\nu_0 < \nu$.
By the Narcowich--Ward--Wendland escape theorem (\citealt{Narcowich2006}; see \citealt[Theorem~1]{Wynne2021} for a Gaussian process formulation), the Matérn conditional mean tends to $f_0 \in H^{\nu_0 + d/2}(\Omega)$ with a rate which depends on $\nu_0$:
\begin{equation} \label{eq:misspecified-rate}
  \sup_{x \in \Omega} \abs[0]{ f_0(x) - \mu_{\nu, f_0}(x \mid X_n) } \lesssim_n n^{-\nu_0 / d}.
\end{equation}
The rate in~\eqref{eq:misspecified-rate} is worst-case optimal because from~\Cref{prop:matern-upper,prop:matern-lower} and~\eqref{eq:var-is-wce} it follows that the worst-case error in $H^{\nu_0 + d/2}(\Omega)$ decays with the same $n^{-\nu_0 / d}$ rate.
This means that oversmoothing does not have an adverse effect on the rate of convergence of the conditional mean.
However, the model may provide overconfident uncertainty quantification because, by a combination of \Cref{prop:matern-upper,prop:matern-lower}, the conditional standard deviation decays with a rate $n^{-\nu / d}$, which is (potentially) faster than the rate on the right-hand side in~\eqref{eq:misspecified-rate}.
Therefore it is possible that
\begin{equation*}
  \limsup_{n \to \infty} \frac{ \abs[0]{ f_0(x) - \mu_{\nu, f_0}(x \mid X_n) } }{ \mathbb{V}_\nu(x \mid X_n)^{1/2}} = \infty
\end{equation*}
for some $x \in \Omega$, which implies there is no fixed $\rho$ for which $f_0$ is contained in the credible set in~\eqref{eq:discussion-credible-set} for all $n \in \N$.

\vspace{0.1cm}

\Cref{thm:matern-main} ensures that maximum likelihood estimation cannot can yield a Gaussian process model which is asymptotically undersmoothing.
The conditional mean therefore tends to the response function with a worst-case optimal rate.
However, there is much statistical literature on the impossibility of constructing, in the presence of noise, non-parametric confidence or credible bands (or sets) that are \emph{adaptive} over natural smoothness classes, such as Sobolev or Hölder spaces, in the sense that (i) the band contains the truth with high probability and (ii) the radius of the band decays with the worst-case optimal rate for any truth in one of the smoothness classes~\citep{Low1997, CaiLow2004, RobinsVaart2006}.
While these results do not directly apply to our setting, they do suggest that it is likely that more restrictive assumptions on $f_0$ than membership in Sobolev spaces are necessary if it is to be guaranteed that uncertainty quantification is not overconfident.
In the context of the white noise and the Gaussian sequence space model, it is known that adaptivity can be guaranteed over self-similar and related classes that exclude ``inconvenient'' or ``deceptive'' response functions whose smoothness cannot be estimated~\citep{Bull2012, Szabo2015, NicklSzabo2016}. 
\Cref{thm:matern-main-2} and these connections suggest that it is the class of self-similar Sobolev functions (or a closely related class) for which one should attempt to establish non-overconfidence of uncertainty quantification.

\subsection{Scale Estimation for Infinitely Smooth Kernels} \label{sec:smooth}

Let $\lambda > 0$ be a scale parameter and consider the infinitely smooth Gaussian kernel
\begin{equation*}
  K_\lambda(x, y) = \Phi_\lambda(x - y) = \Phi\bigg( \frac{x - y}{\lambda} \bigg), \quad \text{ where } \quad \Phi(z) = \exp\bigg( \! -\frac{1}{2} \norm[0]{z}^2 \bigg).
\end{equation*}
\rev{Rescaled Gaussian processes with Gaussian covariance kernel are studied in \citet{VaartZanten2009}}, while results on scale parameter estimation for a kernel related to the Gaussian can be found in \citet{HadjiSzabo2021}.
The Fourier transform of $\Phi$ is
\begin{equation*}
  \widehat{\Phi}(\xi) = (2\pi)^{d/2} \exp\bigg(\!-\frac{1}{2} \norm[0]{\xi}^2 \bigg),
\end{equation*}
and from the scaling properties of the Fourier transform we get
\begin{equation*} 
  \widehat{\Phi}_\lambda(\xi) = \lambda^d \, \widehat{\Phi}( \lambda \xi) = (2\pi \lambda^2)^{d/2} \exp\bigg(\!-\frac{1}{2} \lambda^2 \norm[0]{\xi}^2 \bigg).
\end{equation*}
Recall from \Cref{sec:sobolev} that $H(K_\lambda)$ contains those square-integrable and continuous functions $f \colon \R^d \to \R$ for vwhich
\begin{equation} \label{eq:gauss-norm}
  \norm[0]{f}_\lambda^2 = \int_{\R^d} \frac{\abs[0]{\widehat{f}(\xi)}^2}{\widehat{\Phi}_\lambda(\xi)} \dif \xi = \frac{1}{(2\pi\lambda^2)^{d/2}} \int_{\R^d} \exp\bigg(\frac{1}{2} \lambda^2 \norm[0]{\xi}^2 \bigg) \lvert \widehat{f}(\xi) \rvert^2 \dif \xi < \infty.
\end{equation}
From~\eqref{eq:gauss-norm} we see that $H(K_{\lambda_0})$ is a proper subset of $H(K_{\lambda_1})$ whenever $\lambda_0 > \lambda_1$, which is a well-known result.
The reasoning in \Cref{sec:parameter-estimation} therefore suggests that~\eqref{eq:variance-decay-interval-ml} and~\eqref{eq:variance-decay-interval-cv} may hold if $f_0 \in H(K_{\lambda_0})$ for some $\lambda_0 > 0$ and, consequently,
\begin{equation*}
  \liminf_{n \to \infty} \hat{\lambda}^{f_0}_\ML(X_n) \geq \lambda_0 \quad \text{ and } \quad \liminf_{n \to \infty} \hat{\lambda}^{f_0}_\CV(X_n) \geq \lambda_0.
\end{equation*}
Similar reasoning applies to many other infinitely smooth stationary kernels, such as the Cauchy kernel defined by $\Phi(z) = (1 + \norm[0]{z}^2)^{-1}$, whose Fourier transforms decay with super-algebraic rates.

However, to prove that either of the assumptions~\eqref{eq:variance-decay-interval-ml} or~\eqref{eq:variance-decay-interval-cv} holds for the Gaussian kernel does not appear to be possible at the moment.
Upper bounds on the conditional variance of the form
\begin{equation*}
  \sup_{x \in \Omega} \mathbb{V}_\lambda( x \mid X_n ) \lesssim_n \exp(-c_\lambda n^{1/d} \log n),
\end{equation*}
where and $c_\lambda$ is a positive constant which depends on $\lambda$, are available in the scattered data approximation literature~\citep[e.g.,][Theorem~6.1]{RiegerZwicknagl2010} if $\Omega \subset \R^d$ is bounded and sufficiently regular and the points $\{x_i\}_{i=1}^\infty$ are such that $h_{n,\Omega} \lesssim_n n^{-1/d}$, but no corresponding lower bounds exist.
Moreover, the constant $c_\lambda$ in these results is unlikely to be optimal.
The result that is closest to being useful is a theorem in \citet{Karvonen2022-power-series}, which states that for the univariate Gaussian kernel we have
\begin{equation*}
  C_{1,\lambda} \frac{1}{(4\lambda^2)^n n!} \leq \sup_{ x \in [-1, 1] } \mathbb{V}_\lambda( x \mid X_n ) \leq C_{2,\lambda} \, n^{-1/4} e^{2\sqrt{n}/\lambda} \frac{1}{(4\lambda^2)^n n!}
\end{equation*}
for sufficiently large $n$ and positive constants $C_{1,\lambda}$ and $C_{2,\lambda}$ that depend on $\lambda$ if $X_n = \{x_{n,i}\}_{i=1}^n$ are the non-nested Chebyshev nodes $x_{n,i} = \cos(\pi(i-1/2)/n)$.
\rev{See also Theorem~2 in \citet{Yarotsky2013}.}
In this setting the rate of convergence is controlled by $\lambda$ because
\begin{equation} \label{eq:gaussian-kernel-sup-ratio}
  n^{1/4} e^{-2\sqrt{n} / \lambda} \bigg( \frac{\lambda^2}{\lambda_0^2} \bigg)^n \lesssim_n \frac{\sup_{x \in [-1, 1]}\mathbb{V}_{\lambda_0}( x \mid X_n )}{\sup_{x \in [-1, 1]} \mathbb{V}_{\lambda}( x \mid X_n )} \lesssim_n n^{-1/4} e^{2\sqrt{n} / \lambda_0} \bigg( \frac{\lambda^2}{\lambda_0^2} \bigg)^n
\end{equation}
tends to zero if $\lambda_0 > \lambda$ and explodes if $\lambda_0 < \lambda$.
However, Equation~\eqref{eq:gaussian-kernel-sup-ratio} is clearly not sufficient to establish either~\eqref{eq:variance-decay-interval-ml} or~\eqref{eq:variance-decay-interval-cv}.

\section*{Acknowledgements}

This work was supported by the Academy of Finland postdoctoral researcher grant \#338567 ``Scalable, adaptive and reliable probabilistic integration''.
I am grateful to the two reviewers for comments that resulted in substantial improvements, including results for self-similar functios.

\end{document}